\pgfplotsset{compat=1.12}
\newcommand{\FF}{\mathcal{F}}
\newcommand{\GG}{\mathcal{G}}
\newcommand{\UU}{\mathcal{U}}
\newcommand{\E}{\mathbb{E}}
\newcommand{\Prob}{\mathbb{P}}
\newcommand{\R}{\mathbb{R}}
\newcommand{\hatt}{\hat{t}}
\newcommand{\tm}{\tilde{\mu}}
\newcommand{\ttt}{\tilde{t}}
\newcommand{\invLa}{\nicefrac{1}{\lambda}}
\newcommand{\defgl}{\mathrel{\mathop:}=}
\theoremstyle{plain}% default
\newtheorem{thm}{Theorem}[section]
\newtheorem{prop}[thm]{Proposition}
\theoremstyle{definition}
\theoremstyle{remark}
\newtheorem*{rem}{Remark}
\begin{document}
\title{Optimal control of electricity input given an uncertain demand}
\author{Simone G\"{o}ttlich\footnotemark[1], \; Ralf Korn\footnotemark[7], \; Kerstin Lux\footnotemark[1]}
\footnotetext[1]{University of Mannheim, Department of Mathematics, 68131 Mannheim, Germany (goettlich@uni-mannheim.de, klux@mail.uni-mannheim.de).}
\footnotetext[7]{TU Kaiserslautern, Department of Mathematics, 67653 Kaiserslautern, Germany (korn@mathematik.uni-kl.de) and Fraunhofer ITWM, Department of Financial Mathamtics, 67663 Kaiserslautern.}
\maketitle

%%%%%%%%%%%%%%%%%%%%%%%%%%%%%%%%%%%%%%%%%%%%%%%%%%%%%%%%%%%%%%%%%%%%%%%%%%%%%%%%%%%%%%%%%%%%%%%%%%%%%%%%%%%%%%%%%%%%%%%%%%%
\begin{abstract}
We consider the problem of determining an optimal strategy for electricity injection that faces an uncertain power demand stream. This demand stream is modeled via an Ornstein-Uhlenbeck process with an additional jump component, whereas the power flow is represented by the linear transport equation. We analytically determine the optimal amount of power supply for different levels of available information and compare the results to each other.
For numerical purposes, we reformulate the original problem in terms of the cost function such that classical optimization solvers can be directly applied. The computational results are illustrated for different scenarios.

\end{abstract}
{\bf AMS subject classifications.} 93E20, 60H10, 65C20\\
{\bf Keywords.} Stochastic optimal control, jump diffusion processes, transport equation

\section{Introduction} \label{sec:Intro}
With the liberalization of the electricity markets in Europe, the modeling of energy prices has become a very active field of research and benefited from a combination of methods adapted from financial mathematics on the one hand and the way prices are formed on energy markets on the other hand. One popular approach is to use so-called structural models. They are based on modeling the electricity demand $Y_t$ at time $t$ by (variants of) Ornstein-Uhlenbeck processes and obtaining the electricity price $S_t$ (for a suitable unit of electricity such as MW) as a deterministic function $f(Y_t)$ of the demand (see \cite{Barlow.2002} as the first and influential source for this approach). Those kind of models have been further developed in a series of papers, see e.g. \cite{Aid.2009, Kiesel.2009, LuciaSchwartz.2002, SchwartzSmith.2000, Wagner.2014}, or the monograph \cite{Benth.2008}.

However, this strand of literature more or less takes the decision problem on the scheduling of electricity input and its distribution to the customer as given. It is mainly concerned with the mechanism of the price agreement. We use a somewhat orthogonal approach and assume that the price decision has already been made by the electricity provider and that the main challenge consists in the actual electricity injection to satisfy the demand as good as possible. 

The problem that we consider contains two major challenges, the modeling of all ingredients involved and the way we can control the electricity input. To deal with the first aspect, we are facing three modeling tasks:
the modeling of the electricity demand (given a forecast for the demand over a specified time span, e.g. one day, one week), the modeling of the electricity input (again, given a forecast of the demand), and the modeling of electricity transmission to the customer. 

To decide on the electricity input given the above modeling ingredients, our objective criterion is the minimization of the quadratic deviation of produced power from the actual demand. There, the main challenge is the modeling of the control possibilities. We again consider and compare three approaches: 
\begin{itemize}
\item the idealized situation where the provider monitors the demand continuously in time and uses a corresponding feedback control of the power production as one extreme,
\item the use of a discrete-time demand monitoring on a given time grid with control actions based on this information, and
\item the ignorance of the actually evolving demand where one uses the forecasted demand as the only information basis for the production.  
\end{itemize}

The main contribution of this paper is therefore to provide
a complete modeling setup for the controlled input of electricity, the realistic consumer demand and the electricity transmission. Furthermore, we present an explicitly solved idealized stochastic control problem for the optimal power input and a comparison to practical control schemes that highlights their quality. We also point out aspects for future research on extending the model and control setup to larger electricity systems.  

The outline of the paper is as follows: We will set up the modeling ingredients in Section \ref{sec:ProblemSetting} where, in addition to the standard Ornstein-Uhlenbeck based modeling, we also consider the use of a jump-diffusion Ornstein-Uhlenbeck process. The announced different control strategy approaches are presented in Section \ref{sec:TheoreticalOCapproaches}. A numerical study in Section \ref{sec:Numerics} highlights the different demand modeling and control approaches. A conclusion sets the stage for future research.

\section{The stochastic optimal control problem} \label{sec:ProblemSetting}
In this section, we describe the choice of an optimal power supply depending on a stochastic power demand as a stochastic optimal control problem. To do so, we model the power demand as a stochastic process and come up with a suitable control problem restricted by the energy transport equation.
Optimal control strategies for the power supply problem subject to given deterministic demands have been considered in \cite{Goettlich.2016,Goettlich2018}. 

\subsection{Problem description} \label{subsec:problemSet}
We first assume the simplest possible form of an electricity system, i.e the flow on a single line. 
Physically, this means, at $x=0$, the power is injected and leaves the system at $x=1$.
We also consider a finite time interval $[0,T]$. Within this period, the power inflow $u(t)\in L^2$ at $x=0$ and the externally given customers' demand $Y_t$ located at $x=1$ are the quantities of interest that need to be matched in an optimal way. 
For simplicity, we assume that the dynamics of the electricity $z(x,t)$ are governed by the linear transport (or advection) equation with constant transport velocity $\lambda>0$ and the following initial and inflow conditions:
\begin{align}
	z_t + \lambda z_x &= 0, \quad x \in (0,1),\ t \in [0,T] \notag \\
	z(x,0) &= z_0(x), \quad z(0,t) = u(t).\label{eq:transportDyn}
\end{align}
In the following, $y(t)=z(1,t)$ denotes the outflow of the system that should be adjusted to the customers' demand $Y_t$.
As we have $u(t) = y(t+\nicefrac{1}{\lambda})$, the output directly results from the choice of the inflow $u(t)$. 

The arising constrained stochastic optimal control problem is then given by
\begin{align}
	\min_{u(t), t \in [0,T-\nicefrac{1}{\lambda}], u \in L^2} &  \E \left[\int_{\nicefrac{1}{\lambda}}^{T} h(Y_s,y(s))ds\right] \ \text{subject to} \ \eqref{eq:transportDyn} , \label{eq:OCrough}
\end{align}
where $\nicefrac{1}{\lambda}$ is the transportation time, $h:\R\times\R \rightarrow \R$ an appropriately chosen loss function, and $\left(Y_t\right)_{t \in [0,T]}$ a given stochastic demand process. 

Next, we present suitable stochastic processes for the modeling of the electricity demand and a reasonable choice for the loss function $h$.

\subsection{Modeling of demand} \label{subsec:Demand}
The actual demand $Y_t$ at the end of the power line, i.e. at $x=1$, is a stochastic time-dependent quantity due to uncertainty about the height and the timing of the customers' demand. However, there are various indicators such as historic demands, current demand, and the actual time, as well as weather and demand forecasts that hint at the use of some specific types of stochastic processes $\left(Y_t\right)_{t \in [0,T]}$, which we are going to discuss below.

\subsubsection{Ornstein-Uhlenbeck process (OUP)}
Given the availability of historical data on electricity demands and also short-term forecasts for each time of the day, it is reasonable to assume the existence of a deterministic process $\mu(t)$ around which the actual demand fluctuates. Indeed, we can identify $\mu(t)$ with the predicted demand at time $t$. Assuming that the remaining uncertainty (\textit{the fluctuation around $\mu(t)$}) is level- and time-independent, the Ornstein-Uhlenbeck process (OUP) is the natural candidate to model the demand. It is given by the stochastic differential equation (SDE)
\begin{align}
	dY_t=\kappa\left(\mu(t)-Y_t\right)dt + \sigma dW_t,\ \quad Y_{0}=y_0, \label{eq:OUP}
\end{align}
where $W_t$ is a one-dimensional Brownian motion, $\sigma,\ \kappa$ are positive constants, and $y_0$ describes the initial demand. Whenever the current demand is higher (lower) than $\mu(t)$, the negative (positive) drift term implies a reversion towards the mean demand level $\mu(t)$. The speed of this mean reversion is described by $\kappa$ and the intensity of demand fluctuations by $\sigma$.

The way the mean reversion speed $\kappa$ and intensity of demand fluctuations $\sigma$ affect the evolution of the demand is shown exemplarily for a constant mean reversion level $\mu(t) \equiv \mu = 10$ in Figure \ref{fig:OUP_fixLevel2_kappa1_sigma1_T8_MC10P3}. Each time, we plot three realizations of the solution of \eqref{eq:OUP} corresponding to the initial values $y_0=6, y_0=9$, and $y_0=14$. The intensity of demand fluctuations is given by $\sigma=1$ in the first row and $\sigma=2$ in the second row. This results in larger fluctuations around the mean demand level $\mu$ (depicted by the black solid line).
From column one to column two, we increase the speed of mean reversion from $\kappa=1$ to $\kappa=3$. As a consequence, it takes the demand process less time to return to the mean demand level $\mu$ whenever it is away from it.
\begin{figure}[h!]
	\subfloat[\ $\kappa=1$, $\sigma=1$]{\includegraphics[width=0.49\textwidth]{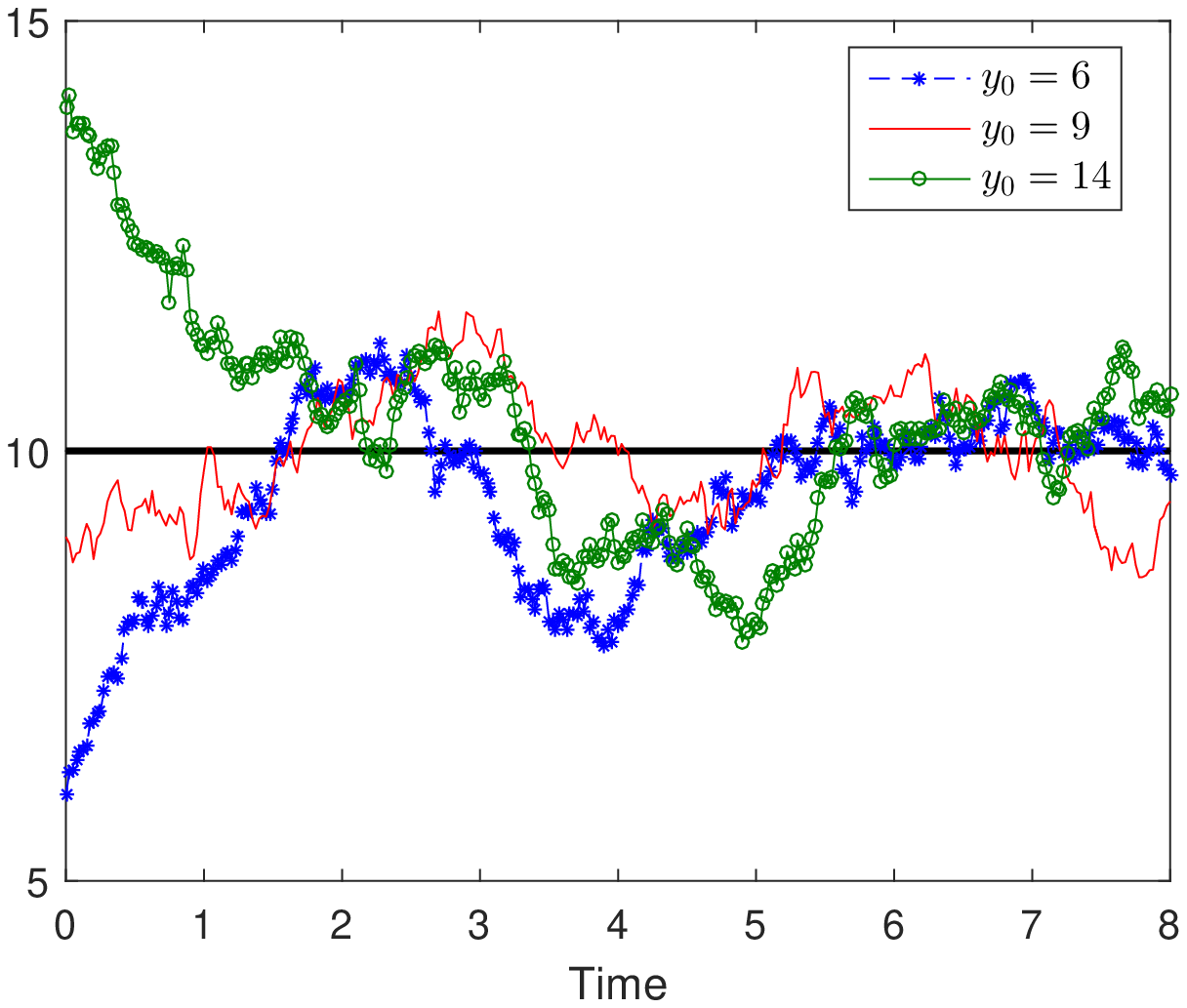}}\hfill
	\subfloat[\ $\kappa=3$, $\sigma=1$]{\includegraphics[width=0.49\textwidth]{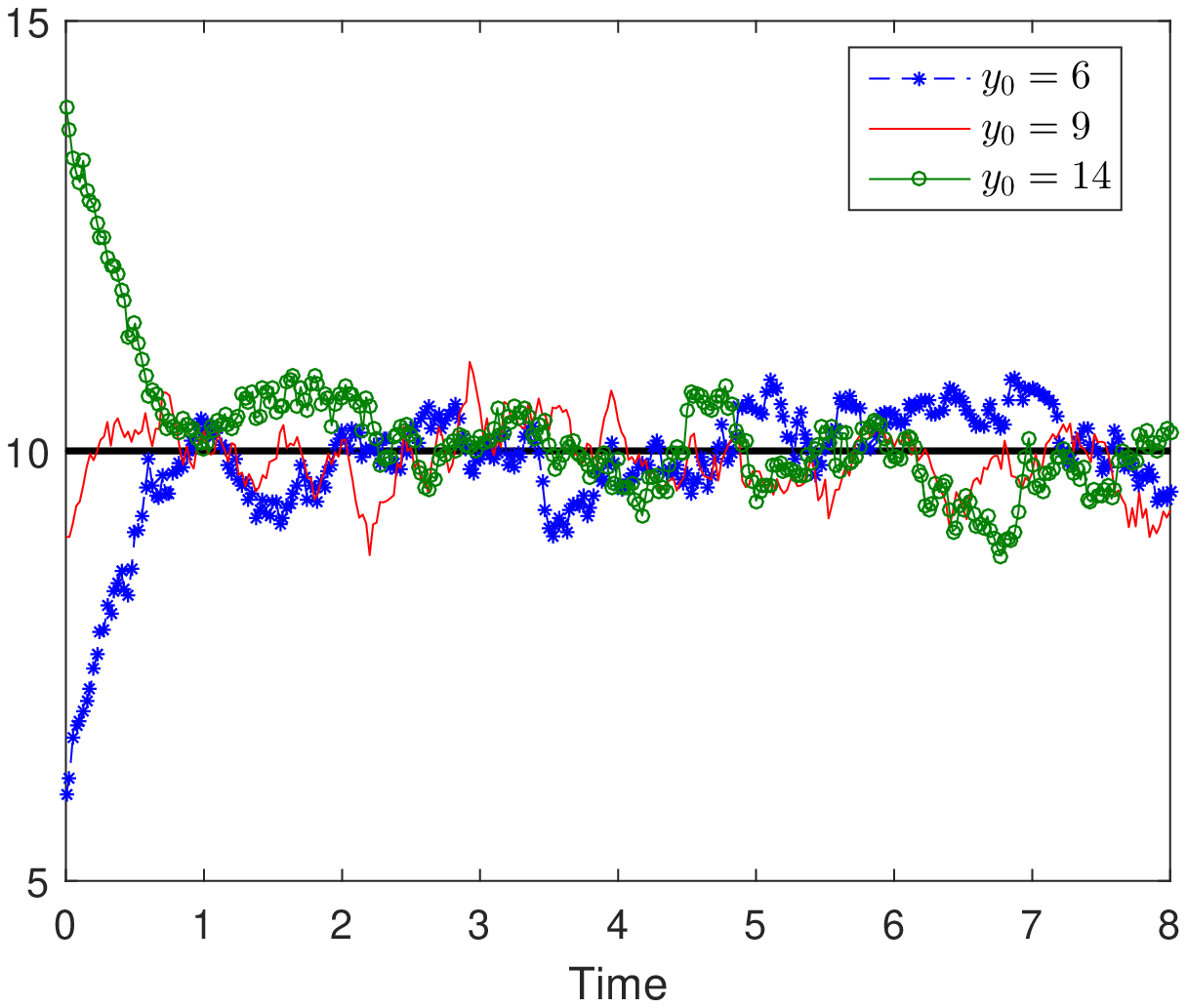}}
    
	\subfloat[\ $\kappa=1$, $\sigma=2$]{\includegraphics[width=0.49\textwidth]{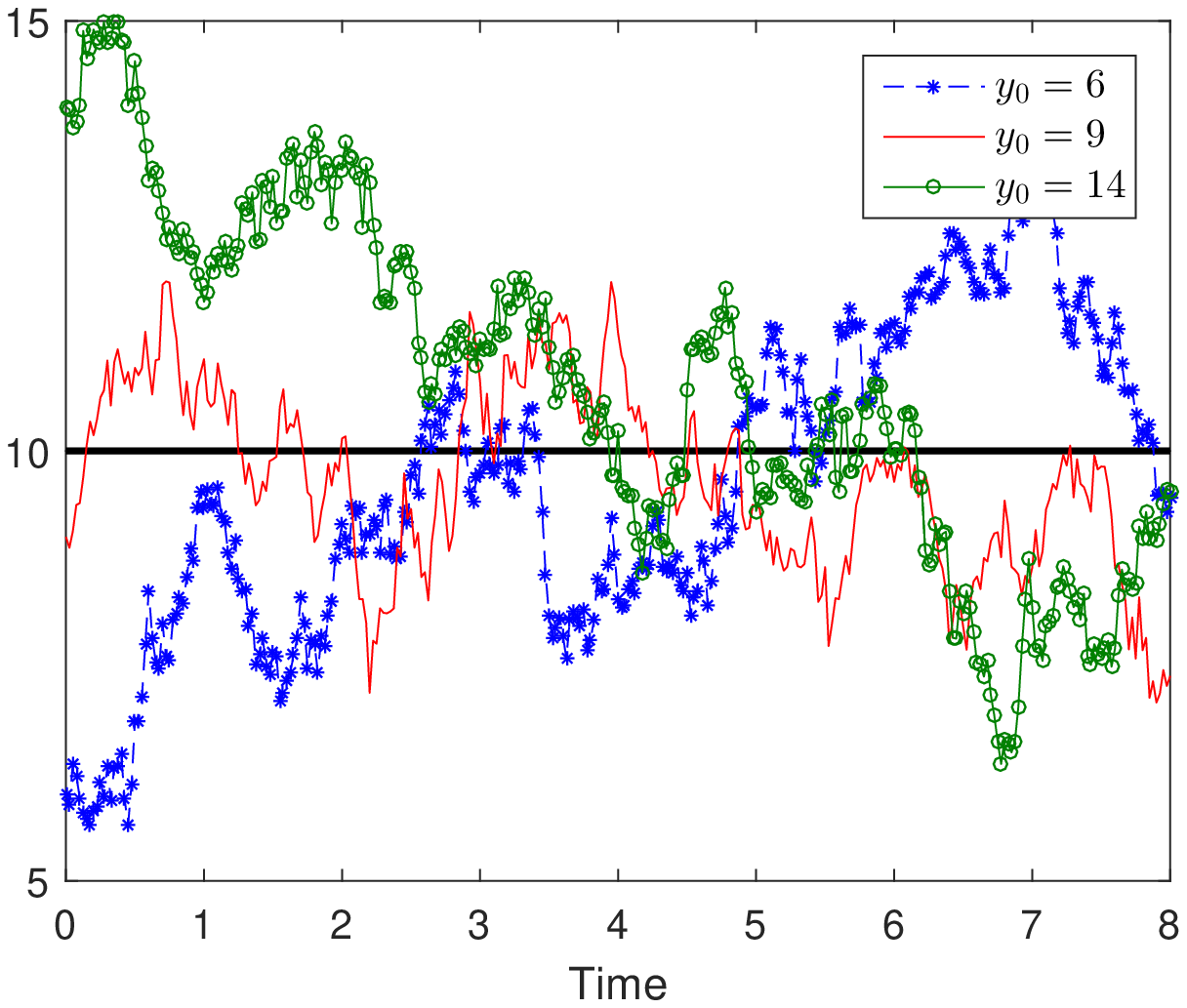}}
	\subfloat[\ $\kappa=3$, $\sigma=2$]{\includegraphics[width=0.49\textwidth]{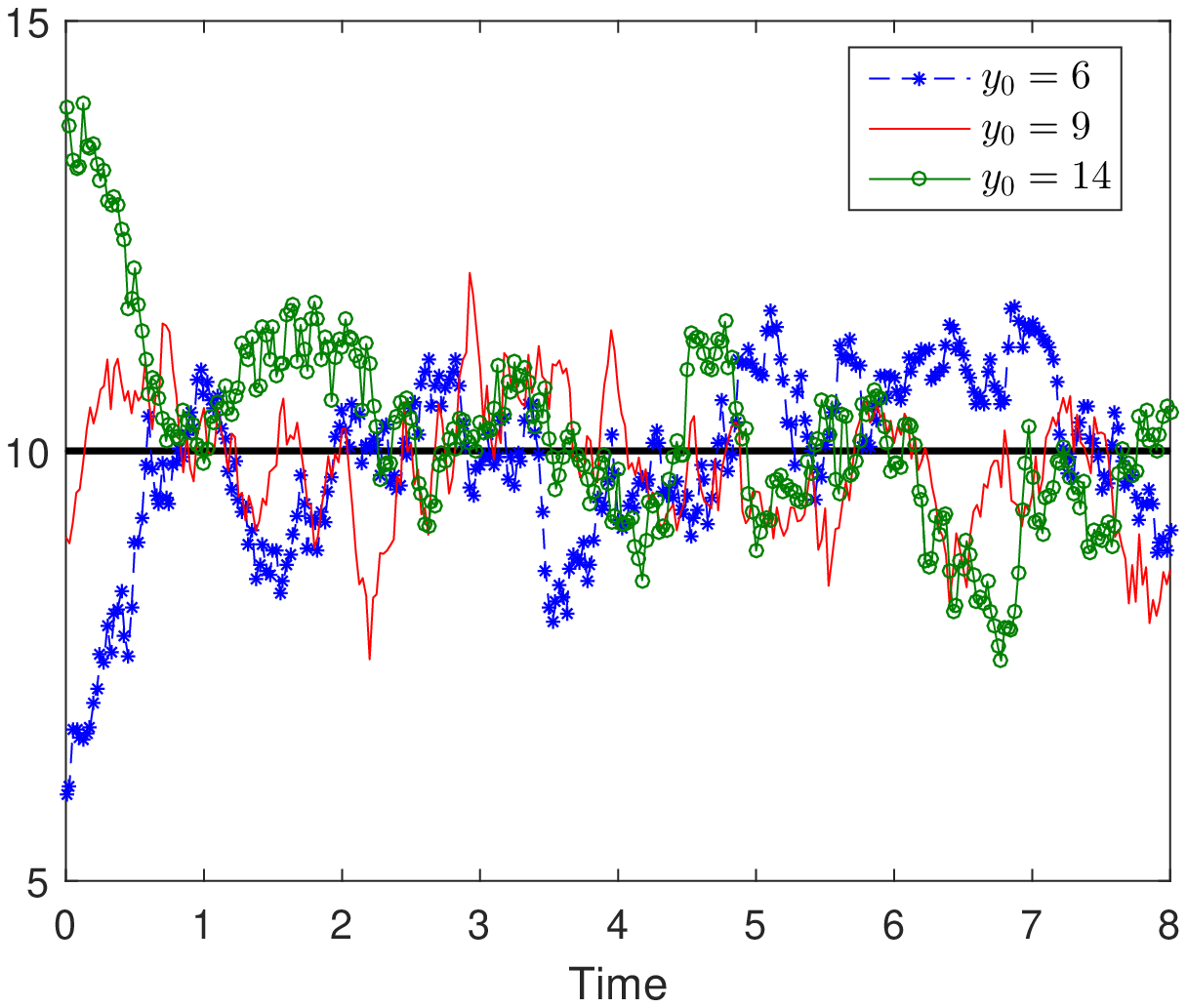}}
	
	\caption{Influence of mean reversion speed $\kappa$ and intensity of demand fluctuations $\sigma$ on the demand}
	
	\label{fig:OUP_fixLevel2_kappa1_sigma1_T8_MC10P3}
\end{figure}

The SDE \eqref{eq:OUP} has an explicit solution given by 
\begin{equation}
Y_t = y_0 e^{-\kappa t} + \kappa \int_0^t{\mu(s) e^{-\kappa(t-s)}ds} + \sigma \int_0^t{e^{-\kappa(t-s)}dW_s} . \label{eq:solOUP}
\end{equation}
Even more, one can infer the distribution of $Y_t$ directly from its explicit form as being normally distributed according to
\begin{equation}
Y_t \sim N\left( {y_0e^{ - \kappa t}  + \kappa \int\limits_0^t {e^{ - \kappa \left( {t - s} \right)} \mu \left( s \right)ds} ,\,\,\sigma ^2 \int\limits_0^t {e^{ - 2\kappa \left( {t - s} \right)}ds} \,} \right).
\end{equation}
The obvious drawback from this solution and its distribution is that the OUP can attain negative values. However, for reasonable parameters, this is not of great interest. To see this, note that in the special case of a constant positive demand $\mu(t) \equiv \mu>0$, we have 
\begin{equation}
Y_t \sim \mathcal{N}\left(\mu + (y_0-\mu)e^{-\kappa t}, \frac{\sigma^2}{2 \kappa}(1-e^{-2\kappa t})\right)\ .
\end{equation}
Thus, as $\mu$ is positive, the probability for a negative value of $Y_t$ gets negligible if the mean reversion $\kappa$ is large compared to $\sigma^2$. This should be the case if the forecast of the demand $\mu(t)$ is reliable. 

The tractability of the process, the desirable property of mean reversion, and the possibility to use forecasts for the demand have made the OUP a popular model for power demand in the electricity price literature, see e.g. \cite{Benth.2008}.

\subsubsection{More general OUP: adding jump components}
To allow for a more general demand behavior, one can replace the Brownian motion $W_t$ in equation \eqref{eq:OUP} by a general L\'{e}vy process (see e.g. \cite{Applebaum.2009} for a survey on L\'{e}vy processes) or can add a jump martingale component. This is particularly appealing as jumps in the electricity demand can occur e.g.\ due to an unexpected start or end of an industry process. 

As a consequence, we include a special type of L\'{e}vy process, the compound Poisson process, into the demand process so that we obtain a jump diffusion process (JDP) version of the OUP \eqref{eq:OUP} of the following form:
\begin{align}
	dY_t = \kappa\left(\mu(t)-Y_t\right)dt + \sigma dW_t + \gamma_t dN_t,\ \quad Y_{0}=y_0. \label{eq:nonCompJDP}
\end{align}
Here, $W_t$ is again a one-dimensional Brownian motion, and $N_t$ is a homogeneous Poisson process with intensity $\nu > 0$, independent of $W_t$. Further, $\gamma_t$ is assumed to be a time-homogeneous stochastic process, which is independent of both the Poisson process and the Brownian motion.
Note that $N_t$ admits only jumps of height $1$. The time between two jumps is exponentially distributed with parameter $\nu$. Both facts together yield that the increments $N_t - N_s$ between two points in time $s<t$ are Poisson-distributed with parameter $\nu(t-s)$. While $N_t$ thus determines if and when a jump in the demand process occurs, $\gamma_t$ models the actual jump height at time $t$ given a jump occurs. For example, one can think of a jump height distribution as a normal or a log-normal distribution. 

As before, the evolution of the demand is shown exemplarily for a constant mean reversion level $\mu(t) \equiv \mu = 10$ (depicted by the black horizontal line) in Figure \ref{fig:JDP_fixLevel10_kappa1_sigma1_T8_MC10P3} including jumps now. Again, we plot three realizations of the solution of \eqref{eq:nonCompJDP} corresponding to the initial values $y_0=6, y_0=9$, and $y_0=14$, based on the same realizations of jump times and Brownian increments to allow for a visualization of the identical jump times by black vertical lines.
After a jump, for a higher mean reversion speed, the process returns faster to its mean reversion level and the amplitude of values around the mean demand level attained by the process is lower.
\begin{figure}[h!]
\hspace{0.5cm}
	\subfloat[\ $\kappa=1$, $\sigma=2$]{\includegraphics[width=0.42\textwidth]{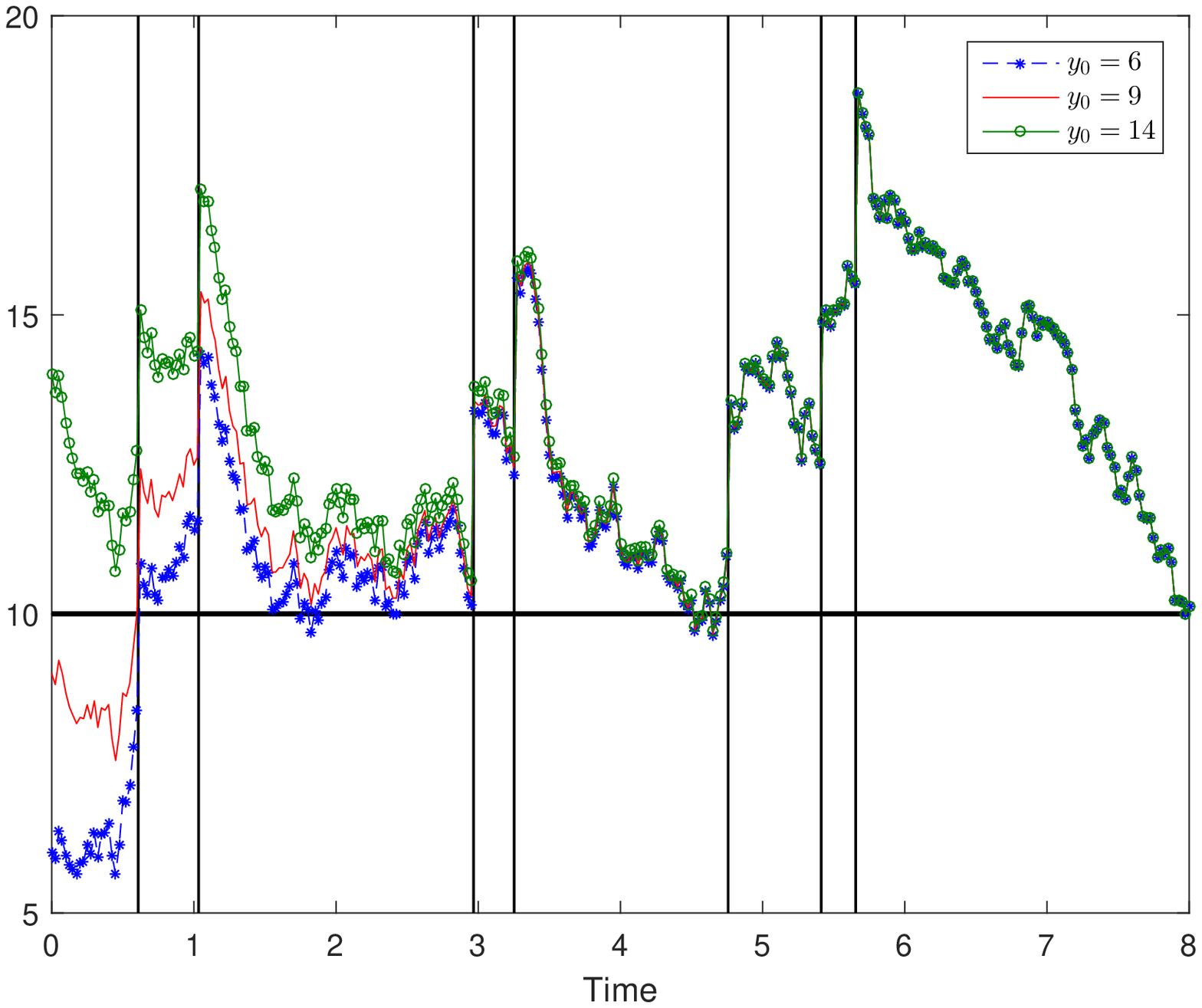}}\hspace{1cm}
	\subfloat[\ $\kappa=3$, $\sigma=2$]{\includegraphics[width=0.42\textwidth]{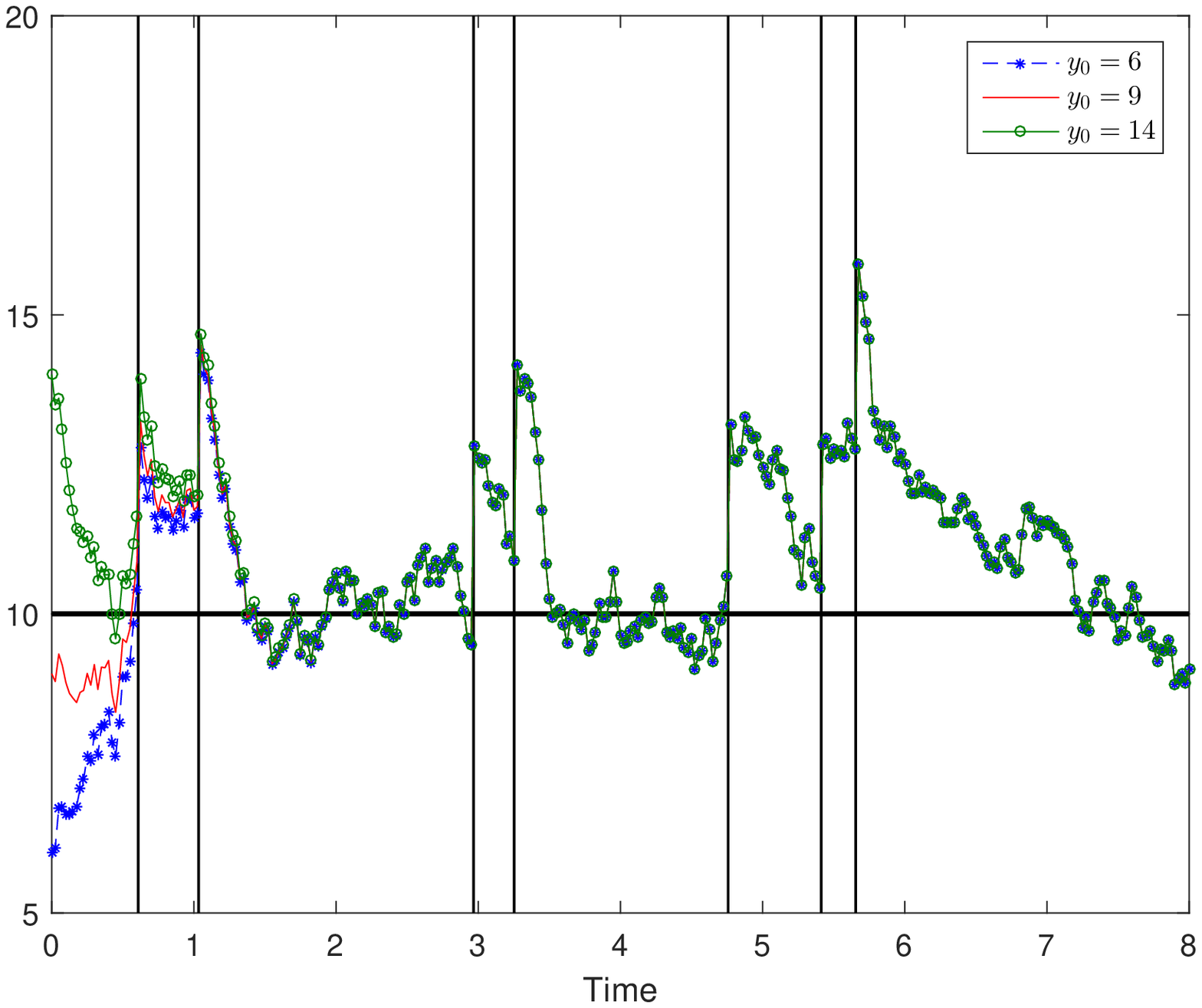}}

	\caption{Demand behavior in the presence of jumps for different mean reversion speeds}
	
	\label{fig:JDP_fixLevel10_kappa1_sigma1_T8_MC10P3}
\end{figure}

Also the JDP equation \eqref{eq:nonCompJDP} has a unique explicit solution.
Indeed, it can directly be verified by the It\^{o}-formula for jump diffusions that we have
\begin{align}
	Y_t =& y_0e^{-\kappa t} + \kappa \int_{0}^{t} \mu(s)e^{-\kappa (t-s)} ds + \sigma \int_{0}^{t} e^{-\kappa (t-s)} dW_s + \sum_{i=1}^{N_t} \gamma_{t_i} e^{-\kappa (t-t_i)}, \label{eq:solJDP}
\end{align}
where the random times $t_i$ are the jump times of the Poisson process $N_t$.
Although, $\gamma_t dN_t$ is a formal abbreviation of an integral, this is actually only a sum composed of a random number of terms, i.e., we have
\begin{equation}
\int_0^t{\gamma_s dN_s} = \sum_{i=1}^{N_t}{\gamma_{t_i}}
\end{equation} 
where $0 < t_1 < ... < t_{N_t} \le t$
denote the jump times of the Poisson process until time $t$. The sum on the right-hand side is called a compound Poisson process. In the simple case of a constant jump height $\gamma > 0$, the compound Poisson process is simply $\gamma$ times a Poisson process. 
To obtain a form of the differential representation of the JDP that allows for a similar interpretation as in the pure diffusion case, we have to transform the jump part into a martingale. This is compensated by a suitable adjustment of the drift term. 

With the introduction of the compensated Poisson integral given by
\[
\gamma_t \widetilde{dN_t} \defgl \gamma_t dN_t - \nu \bar{\gamma} dt,\
\]
and the notation $\bar{\gamma}=\E(\gamma_t)=\E(\gamma)$, this integral is a martingale. This then allows for the desired equivalent formulation of the jump diffusion representation \eqref{eq:nonCompJDP} as
\begin{align}
dY_t = \kappa\left(\mu(t)-Y_t+\frac{\bar{\gamma}\nu}{\kappa}\right)dt + \sigma dW_t + \gamma_t \widetilde{dN_t}\ .\label{eq:jumpSDE}
\end{align}

With this representation, the parameter $\kappa$ regains its interpretation as the speed of mean reversion to the mean demand level. Note, however, that the presence of the compensated jump process has changed the mean reversion level to 
\[
\tm(t) = \mu(t) + \frac{\bar{\gamma} \nu}{\kappa} \ .
\]
By the OUP \eqref{eq:OUP} and the jump diffusion process JDP \eqref{eq:nonCompJDP}, we have two possibi\-lities to model the stochastic electricity demand.

\subsection{Choice of the objective function} \label{subsec: OF}
To complete the formulation of the constrained stochastic optimal control problem \eqref{eq:OCrough}, we specify the loss function $h$. 
As both excess supply as well as undersupply should be penalized, it seems reasonable to measure the quality of the control strategy that results in the electricity output in terms of the quadratic deviation between realized demand $Y_t$ and output $y(t)$ over time. In \eqref{eq:OCrough}, this leads to the objective function given by
\begin{align}
	OF(Y_s,y(s)) = \int_{\nicefrac{1}{\lambda}}^{T} \E\left[(Y_s - y(s))^2 \right] ds. \label{eq:OF}
\end{align}
As this choice is more suitable to figure out a good control strategy than to actually describe exact gains or losses (they depend on pricing issues and contractual aspects that we do not face in this paper), we only consider the choice \eqref{eq:OF}. 
Furthermore, the objective function \eqref{eq:OF} allows for an explicit calculation of the optimal control strategy (see Subsection \ref{subsec:OCdiffset}).

Note that the controller has (at most) information about the demand $Y_s, s \leq t$ to determine the optimal output $y(t+\frac{1}{\lambda})$ which then faces the demand $1/\lambda$ units of time later. In measurability terms, the control input $u(t)$ should be predictable with respect to the $\sigma$-algebra $\mathcal{F}_{t}$, $ t \in [0, T-\frac{1}{\lambda}]$. Thus, a perfect match between the future demand and the one about which the controller can decide now would require knowledge of future information. Hence, a perfect match between future demand and actual control is not possible.

Even more, considering controls $u(t)$ that are based on full information about the actual demand process up to time $t$ at all times are hardly realistic for practical purposes. Therefore, three types of controls  -- two of them admitting more realistic choices of measurability assumptions -- are presented and compared in Subsection \ref{subsec:diffSet}.

Summarizing, we state the complete constrained stochastic optimal control (SOC) problem as follows:
\begin{subequations}
\begin{align}
	\min_{u(t), t \in [0,T-\nicefrac{1}{\lambda}], u \in L^2} & OF\left(Y_s,y(s)\right) \label{eq:SOC-Prob}\\
	z_t + \lambda z_x &= 0, \quad x \in (0,1),\ t \in [0,T] \label{eq:transportDynInOptProb}\\
	z(x,0) &= z_0(x), \quad
	z(0,t) = u(t). \label{eq:control} \\
	dY_t &= \kappa\left(\mu(t)-Y_t\right)dt + \sigma dW_t + \gamma_t dN_t,\ \quad Y_{0}=y_0. \label{eq:JDPinOptProb}
\end{align}
\end{subequations}
The formulation of the stochastic optimal control problem for an OUP-type demand is obtained by setting $\gamma_t \equiv 0$.
%%%%%%%%%%%%%%%%%%%%%%%%%%%%%%%%%%%%%%%%%%%%%%%%%%%%%%%%%%%%%%%%%%%%%%%%%%%%%%%%%%%%%%%%%%%%%%%%%%%%%%%%%%%%%%%%%%%%%%%%%%%
\section{Optimal control strategies: different information levels} \label{sec:TheoreticalOCapproaches}
In this section, we focus on the solution of the SOC problem \eqref{eq:SOC-Prob}-\eqref{eq:JDPinOptProb}.
A common approach to tackle such a control problem might be a Fokker-Planck based control framework. This means, the distribution properties of the stochastic process are represented by the corresponding Fokker-Planck equation describing the evolution of the probability density of the process over time (see \ e.\ g.\ \cite{Borzi.2013, Breitenbach.2018, Gaviraghi.2017a, Roy.2018}, and more recently  \cite{Annunziato.2018}).

However, the SOC problem under consideration benefits from the explicit relation between inflow and outflow, i.e. $u(t) = y(t+\nicefrac{1}{\lambda})$, and the fact that the stochastic process itself does not contain the control variable. Therefore, there is no need to use a Fokker-Planck reformulation since the control strategy can be computed straightforward.

We first start with an idealized setting, where we aim at determining a best-possible optimal control $u(t) \in L^2$ based on the stochastic characteristics of the demand process.

\subsection{Conditional expectation as the general solution} \label{subsec:diffSet}
We first state a general projection result yielding that the conditional expectation is the $L^2$-optimal approximation of a random variable. Although, this is a well-known result, we give the proof for completeness (see \cite[Corollary 8.16]{Klenke.2008} for an even more detailed version).

\begin{prop} \label{prop:optSol}
Let $(\Omega, \FF, P)$ be a complete probability space, $\GG$ be sub-$\sigma$-algebra of $\FF$. Let further $X, Z$ both be real-valued and square integrable random variables on $\Omega$ where in addition $Z$ is $\GG$-measurable. Then, the conditional expectation $\hat{Z}:=\E\left(X|\GG\right)$ is the minimizer of the mean-square distance from $X$
\begin{equation}
msd(X, Z) := \E\left(\left( X - Z\right)^2\right)
\end{equation}
among all such random variables $Z$.
\end{prop}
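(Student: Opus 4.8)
The plan is to prove that $\hat Z = \E(X\mid\GG)$ minimizes the mean-square distance by the standard orthogonal-projection argument: take an arbitrary $\GG$-measurable square-integrable $Z$, write $X - Z = (X - \hat Z) + (\hat Z - Z)$, and expand the square. The key observation is that $\hat Z - Z$ is itself $\GG$-measurable and square integrable, while the ``residual'' $X - \hat Z$ is orthogonal to every $\GG$-measurable square-integrable random variable in the $L^2$ inner product. Hence the cross term vanishes and we get
\begin{equation}
msd(X,Z) = \E\big((X-\hat Z)^2\big) + \E\big((\hat Z - Z)^2\big) \geq \E\big((X-\hat Z)^2\big) = msd(X,\hat Z),
\end{equation}
with equality precisely when $Z = \hat Z$ almost surely.

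First I would record the integrability bookkeeping: since $X \in L^2$, Jensen's inequality for conditional expectations gives $\hat Z^2 = \E(X\mid\GG)^2 \leq \E(X^2\mid\GG)$, so $\E(\hat Z^2) \leq \E(X^2) < \infty$ and $\hat Z \in L^2$; consequently $\hat Z - Z \in L^2$ as well, and every product appearing below is integrable by Cauchy--Schwarz. Next I would establish the orthogonality claim: for any bounded (or, after a truncation/approximation step, any square-integrable) $\GG$-measurable $V$, we have $\E\big((X - \hat Z)V\big) = \E(XV) - \E(\hat Z V) = 0$ by the defining property of conditional expectation, namely $\E(\hat Z V) = \E(\E(X\mid\GG)V) = \E(XV)$ — this is just the ``taking out what is known'' / definition of $\E(\cdot\mid\GG)$, first for indicators $V = \ind_A$ with $A \in \GG$, then for simple functions, then for general $V \in L^2$ by monotone/dominated convergence. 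Applying this with $V = \hat Z - Z$ kills the cross term in the expansion.

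The remaining steps are routine: substitute back, note $\E\big((\hat Z - Z)^2\big) \geq 0$ to conclude the inequality, and observe that equality forces $\E\big((\hat Z - Z)^2\big) = 0$, i.e. $Z = \hat Z$ $P$-a.s., which gives uniqueness of the minimizer (up to null sets). I do not expect any genuine obstacle here — the statement is a textbook $L^2$-projection fact. The only point demanding a little care is the orthogonality step for unbounded $V \in L^2$: one should not simply assert $\E((X-\hat Z)V) = 0$ but justify it via the approximation of $V$ by bounded $\GG$-measurable functions together with a Cauchy--Schwarz/dominated-convergence passage to the limit, using $X - \hat Z \in L^2$. That is the part I would write out most carefully, while the algebraic expansion of the square can be left essentially verbatim.
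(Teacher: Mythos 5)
Your proposal is correct and follows essentially the same route as the paper: the same decomposition $X - Z = (X-\hat Z) + (\hat Z - Z)$, expansion of the square, and vanishing of the cross term, which the paper establishes by conditioning on $\GG$ and pulling out the $\GG$-measurable factor, while you phrase it as the defining orthogonality property $\E((X-\hat Z)V)=0$ for $\GG$-measurable $V\in L^2$ -- the same fact in slightly different form. Your extra care with integrability (Jensen, Cauchy--Schwarz, approximation by bounded $V$) and the uniqueness remark are fine additions but do not change the argument.
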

\begin{proof}
As $X$ is square integrable, the conditional expectation $\E\left(X|\GG\right)$ also is. Under the assumptions on $X$ and $Z$, we can calculate the mean-square distance explicitly as 
\begin{eqnarray*}
&&\E\left(\left( X - Z\right)^2\right) = \E\left(\left(X-\E\left(X|\GG\right)+ \E\left(X|\GG\right)- Z\right)^2\right) \\
&&= \E\left(\left(X- \E\left(X|\GG\right)\right)^2\right) + \E\left(\left( \E\left(X|\GG\right)- Z\right)^2\right) \\
&& \quad \quad + 2\E\left(\left(X - \E\left(X|\GG\right)\right)\left(\E\left(X|\GG\right)-Z\right)\right) \ .
\end{eqnarray*}
Using the fact that the expectation of the conditional expectation is the unconditional expectation and the $\GG$-measurability of $Z$ and $\E(X|\GG)$, we can condition on $\GG$ to obtain
\begin{eqnarray*}
&& 2\E\left(\left(X - \E\left(X|\GG\right)\right)\left(\E\left(X|\GG\right)-Z\right)\right) \\
&& =2\E\left[\E\left(\left(X - \E\left(X|\GG\right)\right)\left(\E\left(X|\GG\right)-Z\right)\right)|\GG\right]\\
&& = 2\E\left[\left(\left(\E\left(X|\GG\right)-Z\right)\E(X - X|\GG\right)|\GG\right] = 0 \ .
\end{eqnarray*}
Thus, the third term in the mean-square distance between $X$ and $Z$ always equals zero, the second term is always non-negative (as an expectation of a square), but zero for $Z = \hat{Z}= \E(X|\GG)$. As further the first term in the mean-square distance representation above is independent of $Z$, we have shown that $\hat{Z}$ is the minimizer of this distance.
\end{proof}

Note that we can calculate the optimal $L^2$-approximation of $X$ without the need to actually calculate the second moment of $X$ or the mean-square distance of $X$ from its conditional expectation.
We use the above general result in Subsection \ref{subsec:OCdiffset} for proving optimality of suitable conditional expectations of the demand process as the control strategy in different settings.

The main difference of the following approaches is the availability of demand information: The first control method (CM1) is based on a setting without demand updates at all, the second (CM2) is based on regular demand updates and the third (CM3) assumes an idealized demand knowledge. Those settings translate into different measurability assumptions for the control process $u(t)$:
\begin{enumerate}
\item[\textbf{CM1}]
\textbf{Setting without demand updates}

Most likely, the electricity producer has to decide in advance how much power is injected into the system at a later point in time. Other reasons for an early decision might be also high costs of obtaining demand information immediately or physical restrictions to directly use this continuously updated information for regulating the production intensity.

In the context of a short-term production optimization, except for the realized demand at the beginning of the production, one might assume that no updates on the available demand information can be incorporated into the control process. Thus, the task is to react in a best possible way given only the demand at the initial time. 

From a mathematical point of view, $u(t)$ is hence assumed to be $\FF_0$-predictable, i.e., the power injection for the whole period needs to be determined at the very beginning (remember that we also assume $\mu(\cdot)$ to be known in advance). Note that the information structure is modeled by the filtration $\FF_t \defgl \sigma\left(Y_s; 0\leq s \leq t\right)$.

How this affects the setting is shown in Figure \ref{fig:noUpdates}.
The only instance at which market information is included into the system is at the very beginning and is represented by the red line. This means that only at the very beginning, the demand actually realized at the market is taken into account. The chosen transportation time is $\Delta t\defgl \invLa =6$, i.e., it takes 6 units of time for the control $u(t)$ (upper timeline) to pass one unit of space (the system is normed to a length of 1) to be then considered as the output $y(t)$ (lower timeline). This transportation is represented by the blue arrows. Due to this time delay, the production is stopped $\Delta t$ time units before the end $T$ of the optimization period.

\item[\textbf{CM2}]
\textbf{Setting with regular demand updates}

On the long-run, it appears as a realistic scenario that, with a certain frequency, updates with respect to the actual demand realized at the market are incorporated into the optimization process (see Figure \ref{fig:regUpdates}).
We now assume that information about the current demand can only be updated at prespecified time points $0=\hatt_0<\hatt_1<\cdots<\hatt_n\le T-\nicefrac{1}{\lambda}$, where $\hatt_i=i \cdot \Delta t_{\text{up}}$, $i \in \{0,1,\cdots,\nicefrac{T-\invLa}{\Delta t_{\text{up}}}\}$, and update frequency $\Delta t_{\text{up}} \in [0,T-\invLa]$.

In Figure \ref{fig:regUpdates}, the instances at which new information is included into the system are again represented by red lines. The update frequency is set to $\Delta t^{\text{up}}=5$. This means that every 5 units of time, the demand actually realized at the market is observed and the forecasted value is adapted optimally given this new information. If the demand is significantly higher than the average, it is more likely that, in the short-run, the upcoming demand is still higher than the average demand and vice versa.

This setting translates into the measurability requirement that $u(t)$ has to be  $\FF_{\hatt_i}$-predictable for all $t \in [\hatt_i,\hatt_{i+1})$. Thus, the measurability assumption applies piecewise over time. 

We remark that the new information has no immediate impact on the system but acts with a time delay equal to the transportation time $\Delta t=\nicefrac{1}{\lambda}$.
As CM2 uses more information than CM1, the expected quadratic deviation of the output $y(t)$ from the actual demand $Y_t$ at time $t$ is smaller on average.

\item[\textbf{CM3}]
\textbf{Idealized setting}

In this setting, all demand information available at the market can be incorporated into an injection strategy instantaneously at any time.

This setting is visualized in Figure \ref{fig:idealSet}.
The current demand information with respect to the time scale of the injection (upper timeline) is available instantaneously at any time (light red background representing demand information from the market). This means that the demand actually realized at the market is observed at any time and results in an updated value for the optimal output based on the current market situation.
Hence, despite the continuous information basis, there is a time delay in reaction equal to the transportation time $\Delta t$.

As stated above, the available information for the controller in the idealized setting is the realized demand up to the current time $t$ given by $Y_s, 0\leq s \leq t$, and the demand dynamics.
In the idealized setting, the stochastic control process $u(t)$ is $\FF_t$-predictable.
As CM3 uses the most information compared to CM1 and CM2, the expected quadratic deviation of the output $y(t)$ from the actual demand $Y_t$ at time $t$ is the smallest on average.
\end{enumerate}

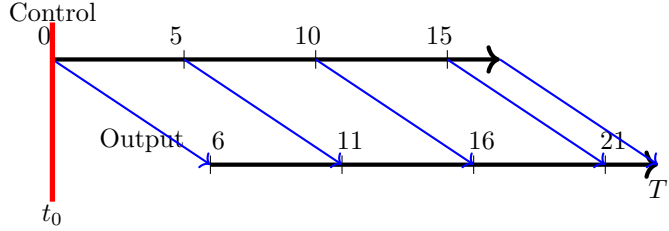
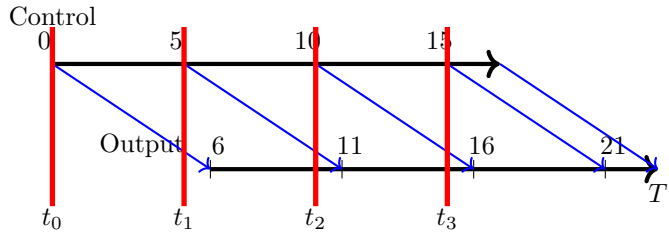
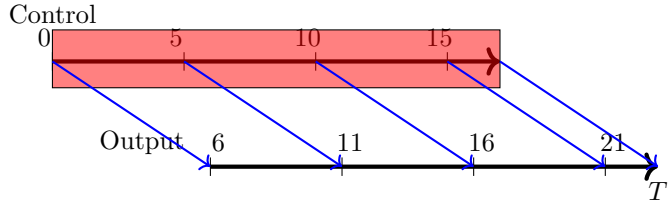
\begin{figure}[h!]
\centering
\subfloat[CM1: Setting without updates\label{fig:noUpdates}]{	\centering
	\tikzstyle{int}=[draw, fill=blue!20, minimum size=2em]
	\tikzstyle{init} = [pin edge={to-,thin,black}]
	
	\begin{tikzpicture}[scale=0.7]
	\foreach \x in {0,5,...,15}
	{        
		\coordinate (A\x) at ($(0,1)+(\x*0.5cm,1)$) {};
		\draw ($(A\x)+(0,5pt)$) -- ($(A\x)-(0,5pt)$);
		\node at ($(A\x)+(0,3ex)-(1ex,0)$) {\x};
	}
	
	% add time shift nodes in control
	\foreach \x in {6,11,16,17,21}
	{        
		\coordinate (A\x) at ($(0,1)+(\x*0.5cm,1)$) {};
	}
	\draw[ultra thick,arrows=->] (A0) -- (A17);
	
	% add time shifts in transport
	\foreach \x in {6,11,16,21}
	{        
		\coordinate (B\x) at ($(0,0)+(\x*0.5cm,0)$) {};
		\draw ($(B\x)+(0,5pt)$) -- ($(B\x)-(0,5pt)$);
		\node at ($(B\x)+(1ex,3ex)$) {\x};
	}
	\coordinate (B23) at ($(0,0)+(23*0.5cm,0)$) {};
	\draw[ultra thick,arrows=->] (B6) -- (B23);
	% add end point
	\node at ($(B23)-(0,3ex)$) {$T$};
	
	% add update nodes in output
	\coordinate (B0) at ($(0,0)+(0*0.5cm,0)$) {};
	\foreach \x in {5,10,15,20}
	{        
		\coordinate (B\x) at ($(0,0)+(\x*0.5cm,0)$) {};
	}
	
	% label control and output
	\node at ($(A0)+(0,6ex)$) {Control};
	\node at ($(B5)+(-0.8cm,3ex)$) {Output};
	
	% label t_up
    \node at ($(B0)-(0,27pt)$) {$t_{0}$};
	
	% add transport connections      
	\draw[blue,thick, arrows=->] (A0) -- (B6);
	\draw[blue,thick, arrows=->] (A5) -- (B11);
	\draw[blue,thick, arrows=->] (A10) -- (B16);
	\draw[blue,thick, arrows=->] (A15) -- (B21);
	\draw[blue,thick, arrows=->] (A17) -- (B23);
	
	% add vertical update lines
	\foreach \x in {0}
	{        
		\draw[red,line width=2pt] ($(A\x)+(0,20pt)$) -- ($(B\x)-(0,20pt)$);
	}
	\end{tikzpicture}}
%%%

	\subfloat[CM2: Setting with regular market updates\label{fig:regUpdates}]{
    \centering
	\tikzstyle{int}=[draw, fill=blue!20, minimum size=2em]
	\tikzstyle{init} = [pin edge={to-,thin,black}]
	
	\begin{tikzpicture}[scale=0.7]
	\foreach \x in {0,5,...,15}
	{        
		\coordinate (A\x) at ($(0,1)+(\x*0.5cm,1)$) {};
		\draw ($(A\x)+(0,5pt)$) -- ($(A\x)-(0,5pt)$);
		\node at ($(A\x)+(0,3ex)-(1ex,0)$) {\x};
	}
	
	% add time shift nodes in control
	\foreach \x in {6,11,16,17,21}
	{        
		\coordinate (A\x) at ($(0,1)+(\x*0.5cm,1)$) {};
	}
	\draw[ultra thick,arrows=->] (A0) -- (A17);
	
	% add time shifts in transport
	\foreach \x in {6,11,16,21}
	{        
		\coordinate (B\x) at ($(0,0)+(\x*0.5cm,0)$) {};
		\draw ($(B\x)+(0,5pt)$) -- ($(B\x)-(0,5pt)$);
		\node at ($(B\x)+(1ex,3ex)$) {\x};
	}
	\coordinate (B23) at ($(0,0)+(23*0.5cm,0)$) {};
	\draw[ultra thick,arrows=->] (B6) -- (B23);
	% add end point
	\node at ($(B23)-(0,3ex)$) {$T$};
	
	% add update nodes in output
	\coordinate (B0) at ($(0,0)+(0*0.5cm,0)$) {};
	\foreach \x in {5,10,15,20}
	{        
		\coordinate (B\x) at ($(0,0)+(\x*0.5cm,0)$) {};
	}
	
	% label control and output
	\node at ($(A0)+(0,6ex)$) {Control};
    \node at ($(B5)+(-0.8cm,3ex)$) {Output};
% 	\node at ($(B0)+(0,6ex)$) {Output};
	
	% label t_up
    \node at ($(B0)-(0,27pt)$) {$t_{0}$};
	\node at ($(B5)-(0,27pt)$) {$t_{1}$};
	\node at ($(B10)-(0,27pt)$) {$t_{2}$};
	\node at ($(B15)-(0,27pt)$) {$t_{3}$};
	
	% add transport connections      
	\draw[blue,thick, arrows=->] (A0) -- (B6);
	\draw[blue,thick, arrows=->] (A5) -- (B11);
	\draw[blue,thick, arrows=->] (A10) -- (B16);
	\draw[blue,thick, arrows=->] (A15) -- (B21);
	\draw[blue,thick, arrows=->] (A17) -- (B23);
	
	% add vertical update lines
	\foreach \x in {0,5,10,15}
	{        
		\draw[red,line width=2pt] ($(A\x)+(0,20pt)$) -- ($(B\x)-(0,20pt)$);
	}
	
	\end{tikzpicture}}
%%%

\subfloat[CM3: Idealized setting \label{fig:idealSet}]{
	\centering
	\tikzstyle{int}=[draw, fill=blue!20, minimum size=2em]
	\tikzstyle{init} = [pin edge={to-,thin,black}]
	
	\begin{tikzpicture}[scale=0.7]
	\foreach \x in {0,5,...,15}
	{        
		\coordinate (A\x) at ($(0,1)+(\x*0.5cm,1)$) {};
		\draw ($(A\x)+(0,5pt)$) -- ($(A\x)-(0,5pt)$);
		\node at ($(A\x)+(0,3ex)-(1ex,0)$) {\x};
	}
	
	% add time shift nodes in control
	\foreach \x in {6,11,16,17,21}
	{        
		\coordinate (A\x) at ($(0,1)+(\x*0.5cm,1)$) {};
	}
	\draw[ultra thick,arrows=->] (A0) -- (A17);
	
	% add time shifts in transport
	\foreach \x in {6,11,16,21}
	{        
		\coordinate (B\x) at ($(0,0)+(\x*0.5cm,0)$) {};
		\draw ($(B\x)+(0,5pt)$) -- ($(B\x)-(0,5pt)$);
		\node at ($(B\x)+(1ex,3ex)$) {\x};
	}
	\coordinate (B23) at ($(0,0)+(23*0.5cm,0)$) {};
	\draw[ultra thick,arrows=->] (B6) -- (B23);
	% add end point
	\node at ($(B23)-(0,3ex)$) {$T$};
	
    % add continuous demand information as a red rectangle
    \draw[fill = red, fill opacity = 0.5] (0,1.5)  rectangle  (8.5,2.6);

	% add update nodes in output
	\coordinate (B0) at ($(0,0)+(0*0.5cm,0)$) {};
	\foreach \x in {5,10,15,20}
	{        
		\coordinate (B\x) at ($(0,0)+(\x*0.5cm,0)$) {};
	}
	
	% label control and output
	\node at ($(A0)+(0,6ex)$) {Control};
	\node at ($(B5)+(-0.8cm,3ex)$) {Output};

	% add transport connections      
	\draw[blue,thick, arrows=->] (A0) -- (B6);
	\draw[blue,thick, arrows=->] (A5) -- (B11);
	\draw[blue,thick, arrows=->] (A10) -- (B16);
	\draw[blue,thick, arrows=->] (A15) -- (B21);
	\draw[blue,thick, arrows=->] (A17) -- (B23);
	\end{tikzpicture}}
	\caption{Update algorithms CM1--CM3 with transportation time $\invLa =6$}
	\label{fig:funcUpdateAlg}
\end{figure}

\subsection{Optimal control for different settings} \label{subsec:OCdiffset}
Next, we derive explicit expressions for the optimal control under the different measu\-rability assumptions corresponding to the different settings CM1--CM3. The results are summarized in the following theorem.
\begin{thm}\label{thm:controls}
Let us assume the demand is a jump diffusion process \eqref{eq:nonCompJDP}.
Then, given a time-homogeneous jump height process $\left(\gamma_t\right)_{t \in [0,T]}$, where $\gamma_{t}$ is distributed according to a given distribution with existing second moment, and $\bar{\gamma}=\E(\gamma_t)$, the optimal control is given
    \begin{enumerate}
        \item for u(t) being $\FF_0$-measurable in the setting without updates (CM1) as
        \begin{align}
		u^\ast(t;t_0) =& e^{-\kappa (t+\invLa)}y_0 + \kappa 		\int_{0}^{t+\invLa} \exp\left(-\kappa (t+\invLa-s)\right) \mu(s) ds + \frac{\bar{\gamma} \nu}{\kappa} \left(1-e^{-\kappa (t+\invLa)}\right), \label{eq:firstMomJDPExp}
		\end{align}
        \item for u(t) being $\FF_{\hatt_i}$-measurable in the setting with regular market updates (CM2) as
        \begin{align}
		& u^{\ast}(t;\hatt_i) =  e^{-\kappa(t+\invLa-\hatt_i)} Y_{\hatt_i} + \kappa\int_{\hatt_i}^{t+\invLa}e^{-\kappa(t+\invLa-s)} \mu(s) ds + \frac{\bar{\gamma} \nu}{\kappa} \left(1-e^{-\kappa (t+\invLa - \hatt_i)}\right), \label{eq:optControlJDPregUpdates}
		\end{align}
        \item and for u(t) being $\FF_t$-measurable in the idealized setting (CM3) as
        \begin{align}
		u^{\ast}(t) =& e^{-\nicefrac{\kappa}{\lambda}} Y_{t} + \kappa\int_{t}^{t+			\invLa}e^{-\kappa(t+\invLa-s)} \mu(s) ds + \frac{\bar{\gamma} \nu}{\kappa}\left(1-e^{-\nicefrac{\kappa}{\lambda}}\right). \label{eq:optControlJDP}
		\end{align}
    \end{enumerate}
\end{thm}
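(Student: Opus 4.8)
The plan is to reduce the stochastic control problem to a one–parameter family of static $L^2$–approximation problems — one for each time $s$ — and then to invoke Proposition~\ref{prop:optSol} and evaluate the resulting conditional expectations explicitly from the demand dynamics.

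\emph{Reduction.} Using the transport relation $u(t)=y(t+\invLa)$, i.e.\ $y(s)=u(s-\invLa)$ for $s\in[\invLa,T]$, I would rewrite the objective \eqref{eq:OF} as $\int_{\invLa}^{T}\E[(Y_s-u(s-\invLa))^2]\,ds$. Since the integrand at time $s$ depends on the control only through the single random variable $u(s-\invLa)$ and is non-negative, the integral is minimized by minimizing $\E[(Y_s-u(s-\invLa))^2]$ for (almost) every $s$ separately, provided the pointwise minimizers assemble into an admissible control; this is clear a posteriori because they turn out to be explicit, jointly measurable, square–integrable expressions (using the assumed second moment of $\gamma$ and the integrability of $\mu$). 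The measurability restriction on $u(s-\invLa)$ becomes: $u(s-\invLa)$ is $\GG$–measurable with $\GG=\FF_0$ in CM1, $\GG=\FF_{\hatt_i}$ for the index $i$ with $s-\invLa\in[\hatt_i,\hatt_{i+1})$ in CM2, and $\GG=\FF_{s-\invLa}$ in CM3.

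\emph{Optimality and computation.} Proposition~\ref{prop:optSol} with $X=Y_s$ and $Z=u(s-\invLa)$ then gives $u^\ast(t)=\E[Y_{t+\invLa}\mid\GG]$ with $\GG$ as above, and it remains to compute this conditional expectation. For CM2 and CM3 I would integrate the SDE \eqref{eq:nonCompJDP} from the conditioning time $a$ ($a=\hatt_i$ for CM2, $a=t$ for CM3) up to $t+\invLa$ using the integrating factor $e^{\kappa\cdot}$:
\begin{align*}
Y_{t+\invLa}=\;&e^{-\kappa(t+\invLa-a)}Y_a+\kappa\int_a^{t+\invLa}e^{-\kappa(t+\invLa-s)}\mu(s)\,ds\\
&+\sigma\int_a^{t+\invLa}e^{-\kappa(t+\invLa-s)}\,dW_s+\int_a^{t+\invLa}e^{-\kappa(t+\invLa-s)}\gamma_s\,dN_s.
\end{align*}
Conditioning on $\FF_a$: the first term is $\FF_a$–measurable, the second is deterministic, the Brownian integral over $[a,t+\invLa]$ has mean zero and is independent of $\FF_a$, and the jump integral over $[a,t+\invLa]$ is independent of $\FF_a$ with mean $\nu\bar\gamma\int_a^{t+\invLa}e^{-\kappa(t+\invLa-s)}\,ds=\frac{\bar\gamma\nu}{\kappa}\bigl(1-e^{-\kappa(t+\invLa-a)}\bigr)$, by independence of $(\gamma_t)$ from $N$ together with $\E[dN_s]=\nu\,ds$ (Campbell's formula for the compound Poisson integral). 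Taking $a=\hatt_i$ yields \eqref{eq:optControlJDPregUpdates}, and $a=t$ (so $t+\invLa-a=\invLa$ and $e^{-\kappa\invLa}=e^{-\nicefrac{\kappa}{\lambda}}$) yields \eqref{eq:optControlJDP}. For CM1, $\FF_0$ is $P$–trivial, so $\E[\cdot\mid\FF_0]=\E[\cdot]$; applying this to the explicit solution \eqref{eq:solJDP} at time $t+\invLa$ (the Brownian integral vanishes in mean, and $\E\bigl[\sum_{i=1}^{N_{t+\invLa}}\gamma_{t_i}e^{-\kappa(t+\invLa-t_i)}\bigr]=\frac{\bar\gamma\nu}{\kappa}(1-e^{-\kappa(t+\invLa)})$) gives \eqref{eq:firstMomJDPExp}.

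\emph{Main obstacle.} Once the reduction to conditional expectations is in place, the rest is bookkeeping via linearity of conditional expectation and the explicit Ornstein–Uhlenbeck-type representation. The two points requiring genuine care are (i) checking that minimizing the integrand pointwise in $s$ yields an admissible control, which rests on the square–integrability of $Y_s$ guaranteed by the second–moment assumption on $\gamma$, and (ii) the mean of the jump integral: the independence of the jump times and heights occurring after the conditioning time from the information available at that time, and the Campbell-type identity for the compound Poisson integral. I expect (ii) to be the only substantive step. The OUP case \eqref{eq:OUP} follows by setting $\gamma_t\equiv 0$.
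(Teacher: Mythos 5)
Your proposal is correct and follows essentially the same route as the paper: reduce to a pointwise $L^2$-approximation, invoke Proposition~\ref{prop:optSol} with $\GG=\FF_0$, $\FF_{\hatt_i}$, $\FF_t$ respectively, and compute $u^\ast(t)=\E[Y_{t+\invLa}\mid\GG]$ from the explicit JDP representation. The only (harmless) deviation is in evaluating the jump contribution, where you use the compensator/Campbell identity $\E\bigl[\int_a^b e^{-\kappa(b-s)}\gamma_s\,dN_s\bigr]=\nu\bar\gamma\int_a^b e^{-\kappa(b-s)}\,ds$ directly, while the paper conditions on the number of jumps and uses the conditional uniform distribution of the jump times before summing the Poisson series; both yield $\frac{\bar\gamma\nu}{\kappa}\bigl(1-e^{-\kappa(b-a)}\bigr)$.
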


\begin{proof}
The key idea of the proof is to choose the $\sigma$-algebra $\GG$ in Proposition \ref{prop:optSol} that corresponds to the underlying setting and then calculate the resulting conditional expectation explicitly.
It is advantageous, first to prove the claim for control method CM3 and then discuss the solutions to CM1 and CM2.
\begin{itemize}
\item[\em{3.}] 
Using the choice of  
$\GG = \FF_{t-\nicefrac{1}{\lambda}}$
in Proposition \ref{prop:optSol}, we directly obtain that the optimal $y^\ast(t)$ for CM3 is given by 
\[
y^\ast(t) = \E\left(Y_t|\FF_{t-\nicefrac{1}{\lambda}}\right)\ .
\]
The expected cost over the time span $[0, T]$ for each point in time $t$ can be minimized.
Due to the explicit solution to \eqref{eq:transportDyn}
we know that the corresponding control 
$u^\ast(t) = y^\ast(t+\invLa)$
is globally feasible as it is $\FF_t$-predictable and square-integrable.
So, the remaining task is now to compute the conditional expectation
$u^\ast(t)= \E\left[Y_{t+\invLa} | \FF_{t}\right]\ .$

The optimal solution of \eqref{eq:SOC-Prob} - \eqref{eq:JDPinOptProb} results from the following calculation, where we plug in the explicit solution \eqref{eq:solJDP} of the JDP.
\begin{align}
u^{\ast}(t) =& \E\left[Y_{t+\nicefrac{1}{\lambda}} | \FF_{t}\right] \notag\\
=&e^{-\nicefrac{\kappa}{\lambda}} Y_{t} + \E\left[\kappa\int_{t}^{t+\invLa}e^{-\kappa(t+\invLa-s)} \mu(s) ds \right.  \notag\\
&\left.+ \sigma \int_{t}^{t+\invLa} e^{-\kappa(t+\invLa-u)}dW_u + \sum_{i=N_{t}+1}^{N_{t+\invLa}} \gamma_{t_i} e^{-\kappa (t+\invLa-t_i)} | \FF_{t}\right] \notag\\
=&e^{-\nicefrac{\kappa}{\lambda}} Y_{t} + \kappa\int_{t}^{t+\invLa}e^{-\kappa(t+\invLa-s)} \mu(s) ds + \bar{\gamma}\E\left[\sum_{i=N_{t}+1}^{N_{t+\invLa}} e^{-\kappa (t+\invLa-t_i)} \right] \label{eq:condExpJump}
\end{align}
Note that we have used the martingale property of the stochastic integral as the deterministic integrand is square-integrable. Furthermore, we have exploited the Markov property of the JDP and the independence of the jump height from the other movements.

It remains to calculate the last expectation in \eqref{eq:condExpJump}. First, we condition on $N_{t+\nicefrac{1}{\lambda}}$ and $N_t$, respectively. We know that, conditional on the number of jumps of a Poisson process in the interval $[t, t+\nicefrac{1}{\lambda}]$, the jump times $t_i$ are all independently and identically $\UU([t,t+1/\lambda])$ distributed random variables (\cite[Prop.\ 2.1.16]{Mikosch.2009}). Thus, we have $\left(t+\nicefrac{1}{\lambda} - t_i\right)$ $\sim\UU(0,1/\lambda)$ and get: 
\begin{align}
& \E\left[\sum_{i=N_{t}+1}^{N_{t+\invLa}} {e^{-\kappa (t+\invLa-t_i)}}\right] = \sum_{i=1}^{\infty}\left(i \int_0^{1/\lambda}{e^{-\kappa x}\lambda dx}\;\cdot e^{-\nicefrac{\nu}{\lambda}} \frac{\left(\nicefrac{\nu}{\lambda}\right)^i}{i!}\right) = \frac{\nu}{\kappa}\left(1-e^{-\nicefrac{\kappa}{\lambda}}\right) \label{eq:firstMomExpJumpPart}
\end{align}
Plugging \eqref{eq:firstMomExpJumpPart} into \eqref{eq:condExpJump} gives the optimal control in case of a JDP-type demand for CM3:
\begin{align*}
u^{\ast}(t) =& e^{-\nicefrac{\kappa}{\lambda}} Y_{t} + \kappa\int_{t}^{t+\invLa}e^{-\kappa(t+\invLa-s)} \mu(s) ds + \frac{\bar{\gamma} \nu}{\kappa}\left(1-e^{-\nicefrac{\kappa}{\lambda}}\right).
\end{align*}
Thus, the optimal control can directly be calculated from the current demand and the weighted forecasted demand over the next $\invLa$ time units.
\item[\em{1.}] 
	For control method CM1, we use Proposition \ref{prop:optSol} with the choice of 
	$\GG = \FF_{0}$
	independently of the time $t \in [0,T]$, and we directly obtain that the optimal value of $y^\ast(t)$ is given by 
\[
y^\ast(t) = \E\left(Y_t|\FF_{0}\right) = \E\left(Y_t\right) \ .
\]
As in the idealized setting CM3, the corresponding optimal control is
$
u^\ast(t;t_0) = y^\ast(t+\invLa) \ .
$
To calculate the expected value of the
jump diffusion process \eqref{eq:nonCompJDP}, we proceed similarly to the calculations done before.
\begin{align}
\E\left[Y_t\right] =& e^{-\kappa t}y_0 + \sigma \E\left[\int_{0}^{t} e^{-\kappa (t-s)} dW_s\right] + \kappa \int_{0}^{t} e^{-\kappa (t-s)}\mu(s) ds + \bar{\gamma} \E\left[\sum_{i=1}^{N_t} e^{-\kappa (t-t_i)}\right] \notag\\
=& e^{-\kappa t}y_0 + \kappa \int_{0}^{t} e^{-\kappa (t-s)} \mu(s) ds + \bar{\gamma}\E\left[\sum_{i=1}^{N_t} e^{-\kappa (t-t_i)}\right].\label{eq:firstMomJDP}
\end{align}
The last expectation in \eqref{eq:firstMomJDP} can be obtained as before
\begin{align}
\E\left[\sum_{i=1}^{N_t} e^{-\kappa (t-t_i)}\right] &= \frac{\nu}{\kappa}\left(1-e^{-\kappa t}\right). \label{eq:expJumpPartFirstMomExp}
\end{align}

Finally, by plugging \eqref{eq:expJumpPartFirstMomExp} into \eqref{eq:firstMomJDP}, we obtain the optimal control for CM1, i.e. in the setting without updates, as
\begin{align*}
	u^\ast(t;t_0) =& e^{-\kappa\left( t+\invLa\right)}y_0 + \kappa \int_{0}^{t+\invLa} e^{-\kappa \left(t+\invLa-s\right)}\mu(s) ds + \frac{\bar{\gamma}\nu}{\kappa}\left(1-e^{-\kappa\left( t+\invLa\right)}\right).
\end{align*}

\item[\em{2.}]
We consider now that the information is only given by $\FF_{\hatt_i}$ and the deterministic demand forecast $\mu(s)$, $s\in [0,t]$.
By then calculating 
\[
u^\ast(t;\hatt_i) = y^\ast(t+\invLa;\hatt_i) = \E\left(Y_{t+\invLa}|\FF_{\hatt_i}\right), \quad \forall \ t \in [\hatt_i,\hatt_{i+1}),
\]
as in the idealized setting CM3, we obtain the claimed form of the optimal control also for CM2. \qedhere
\end{itemize} 
\end{proof}
 \begin{rem}
 The corresponding controls applying the OUP \eqref{eq:OUP} for the demand process are obtained by setting $\bar{\gamma} \equiv 0$ in Theorem \ref{thm:controls}.
 \end{rem}

\subsection{Relation between the different approaches} \label{subsec:RelationApproaches}

The three control approaches CM1--CM3 are related to each other via the frequency of the updates on the actual power demand. The approach without updates (CM1) can be derived by setting the time between two updates as $\Delta t_{\text{up}} > T-\invLa$ in CM2. The latter involves that the only relevant update time is $t_0=0$, i.\ e., we require that $u(t)$ is $\FF_0$-measurable for all $t \in [0,T-\invLa]$. 

In the following theorem, we show a kind of consistency condition for the time between two updates $\Delta t_{\text{up}}$ getting infinitesimally small. 

\begin{thm}\label{thm:convergenceUpToTheo}
The optimal control in the idealized setting CM3 is the limit of the optimal control with updates at the discrete times $\hatt_i=i \Delta t_{\text{up}}$ if the time between the updates $\Delta t_{\text{up}}$ tends to zero, i.e.
$$\lim_{\Delta t_{\text{up}} \rightarrow 0} u^{\ast}(t)-u^{\ast}(t;\hatt_i) = 0 \ \ \Prob-a.s.\;.$$
\end{thm}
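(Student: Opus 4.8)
The plan is to work directly with the closed-form expressions from Theorem~\ref{thm:controls}. Fix $t \in [0,T-\invLa]$ and let $\hatt_i=\hatt_i(\Delta t_{\text{up}})$ denote the last update point not exceeding $t$, so that $0\le t-\hatt_i<\Delta t_{\text{up}}$ and hence $\hatt_i\uparrow t$ as $\Delta t_{\text{up}}\downarrow 0$. Subtracting \eqref{eq:optControlJDPregUpdates} from \eqref{eq:optControlJDP} writes $u^{\ast}(t)-u^{\ast}(t;\hatt_i)$ as a sum of three pieces: a \emph{stochastic term} $e^{-\nicefrac{\kappa}{\lambda}}Y_t-e^{-\kappa(t+\invLa-\hatt_i)}Y_{\hatt_i}$; a \emph{drift remainder} $-\kappa\int_{\hatt_i}^{t}e^{-\kappa(t+\invLa-s)}\mu(s)\,ds$ coming from the differing lower limits of integration; and a \emph{compensator remainder} $\frac{\bar\gamma\nu}{\kappa}\bigl(e^{-\kappa(t+\invLa-\hatt_i)}-e^{-\nicefrac{\kappa}{\lambda}}\bigr)$. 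I would then show each of these vanishes as $\Delta t_{\text{up}}\downarrow 0$ --- the first only $\Prob$-almost surely, the other two deterministically --- which gives the claim.

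The two remainder terms are routine. Since $t+\invLa-\hatt_i\to\invLa$, continuity of the exponential kills the compensator remainder. For the drift remainder, $e^{-\kappa(t+\invLa-s)}\le 1$ for $s\in[\hatt_i,t]$, so it is bounded in modulus by $\kappa\int_{\hatt_i}^{t}|\mu(s)|\,ds$, which tends to $0$ by absolute continuity of the Lebesgue integral; here one only uses that $\mu$ is locally integrable, which is needed in any case for \eqref{eq:solJDP} to make sense.

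The genuine issue --- and the main obstacle --- is the stochastic term, where the tempting shortcut ``$Y_{\hatt_i}\to Y_t$ by path continuity'' is \emph{false}: by \eqref{eq:solJDP}, $Y$ is the sum of a pathwise continuous part and the pure-jump process $s\mapsto\sum_{i=1}^{N_s}\gamma_{t_i}e^{-\kappa(s-t_i)}$, which is only right-continuous with left limits. Because $\hatt_i$ approaches $t$ from below, every path still yields $Y_{\hatt_i}\to Y_{t-}$ and $e^{-\kappa(t+\invLa-\hatt_i)}\to e^{-\nicefrac{\kappa}{\lambda}}$, so the stochastic term converges pathwise to $e^{-\nicefrac{\kappa}{\lambda}}\bigl(Y_t-Y_{t-}\bigr)$. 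To conclude one shows $Y_{t-}=Y_t$ $\Prob$-a.s.\ for the fixed deterministic time $t$: the jump $Y_t-Y_{t-}$ is nonzero only if $N$ jumps at $t$, and $\Prob(\Delta N_t\neq 0)=\lim_{\varepsilon\downarrow 0}\Prob(N_t-N_{t-\varepsilon}\ge 1)=\lim_{\varepsilon\downarrow 0}(1-e^{-\nu\varepsilon})=0$. Adding the three limits gives $u^{\ast}(t)-u^{\ast}(t;\hatt_i)\to 0$ $\Prob$-a.s. (If one wanted a single null set working simultaneously for all $t$, one would instead invoke that the jump set $\{t_i\}$ is a.s.\ countable, but for the pointwise-in-$t$ statement the elementary argument above is enough.)
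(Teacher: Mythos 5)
Your proposal is correct, and it takes a route that is close to, but not identical with, the paper's. The paper first substitutes the explicit JDP solution \eqref{eq:solJDP} into \eqref{eq:optControlJDP} and \eqref{eq:optControlJDPregUpdates}, so that after the deterministic parts cancel the difference \eqref{eq:diffOCJDP} consists of an increment of the It\^o integral over $[\hatt_i,t]$, an increment of the jump sum, and the compensator term, and it then concludes from pathwise continuity of the stochastic integral together with the c\`adl\`ag property of the Poisson part. You instead keep $Y_t$ and $Y_{\hatt_i}$ exactly as they appear in Theorem \ref{thm:controls} and split the difference into a stochastic term, a drift remainder and a compensator remainder, treating each separately; the two remainders are handled the same way in both arguments. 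The genuine difference is your treatment of the delicate point: since $\hatt_i$ approaches $t$ from below, the c\`adl\`ag property only yields $Y_{\hatt_i}\to Y_{t-}$, and one must additionally argue that $Y_{t-}=Y_t$ $\Prob$-a.s.\ at the fixed deterministic time $t$, which you do via $\Prob(\Delta N_t\neq 0)=\lim_{\varepsilon\downarrow 0}\left(1-e^{-\nu\varepsilon}\right)=0$. The paper's closing sentence (``due to the c\`adl\`ag property \ldots we obtain the convergence'') leaves precisely this left-limit-versus-value step implicit, so your version tightens the argument; conversely, the paper's substitution of \eqref{eq:solJDP} has the advantage of exhibiting explicitly which stochastic increments (the It\^o integral over $[\hatt_i,t]$ and the jumps in $(\hatt_i,t]$) must vanish, which is convenient if one wants to quantify the error in $\Delta t_{\text{up}}$ rather than merely prove convergence.
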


\begin{proof}
By plugging \eqref{eq:solJDP} into \eqref{eq:optControlJDPregUpdates} in the case of regular updates (CM2) leads to 
\begin{align}
u^{\ast}(t;\hatt_i) =& y_0 e^{-\kappa (t+\invLa)} + \kappa \int_{0}^{t+\invLa} \mu(s)e^{-\kappa (t+\invLa-s)} ds \notag\\
&+ \sigma \int_{0}^{\hatt_i} e^{-\kappa (t+\invLa-s)} dW_s + \gamma \sum_{j=1}^{N_{\hatt_i}} e^{-\kappa (t+\invLa-t_j)} + \frac{\bar{\gamma} \nu}{\kappa} \left(1-e^{-\kappa (t+\invLa - \hatt_i)}\right), \label{eq:OCJDPregUpExplicit}
\end{align}
and by plugging \eqref{eq:solJDP} into \eqref{eq:optControlJDP}, the optimal control in the idealized setting CM3 reads
\begin{align}
u^{\ast}(t) =& y_0e^{-\kappa (t+\invLa)} + \kappa \int_{0}^{t+\invLa} \mu(s)e^{-\kappa (t+\invLa-s)} ds \notag\\
&+ \sigma \int_{0}^{t} e^{-\kappa (t+\invLa-s)} dW_s + \gamma \sum_{j=1}^{N_t} e^{-\kappa (t+\invLa-t_j)} + \frac{\bar{\gamma} \nu}{\kappa}\left(1-e^{-\nicefrac{\kappa}{\lambda}}\right). \label{eq:OCJDPExplicit}
\end{align}
Then, the difference is
\begin{align}
u^{\ast}(t)-u^{\ast}(t;\hatt_i) =&  \sigma \left(\int_{0}^{t} e^{-\kappa (t+\invLa-s)} dW_s - \int_{0}^{\hatt_i} e^{-\kappa (t+\invLa-s)} dW_s\right) \notag\\
&+ \gamma \left( \sum_{j=1}^{N_t} e^{-\kappa (t+\invLa-t_j)}-\sum_{j=1}^{N_{\hatt_i}} e^{-\kappa (t+\invLa-t_j)}\right) + \frac{\bar{\gamma} \nu}{\kappa} \left(e^{-\kappa (t+\invLa - \hatt_i)} - e^{-\nicefrac{\kappa}{\lambda}}\right). \label{eq:diffOCJDP}
\end{align}

Note that both It\^o-integrals are pathwise continuous stochastic processes. 
Further, the Poisson process is pathwise finite and admits c\`{a}dl\`{a}g-paths, i.e.\ right-continuous paths with existing left-sided limit, almost surely. 
Thus,  for almost every $\omega \in \Omega$, the left and right sides of \eqref{eq:diffOCJDP} are c\`{a}dl\`{a}g functions.

For each $\Delta t_{\text{up}}$ and an arbitrary but fixed $\ttt$, we can find $i \in \{0,1,\cdots,\nicefrac{T-\invLa}{\Delta t_{\text{up}}}\}$ such that $\ttt \in [\hatt_i,\hatt_{i+1})$, and we have
\begin{align*}
0 \leq \ttt-\hatt_i \leq \Delta t_{\text{up}}.
\end{align*}
Hence, as $\Delta t_{\text{up}}$ tends to zero, $\hatt_i$ tends to $\ttt$, and we have
\begin{align*}
\lim_{\Delta t_{\text{up}} \rightarrow 0} \lvert u^{\ast}(\ttt)-u^{\ast}(\ttt;\hatt_i)\rvert = \lim_{\hatt_i \rightarrow \ttt} \lvert u^{\ast}(\ttt)-u^{\ast}(\ttt;\hatt_i)\rvert.
\end{align*}
Due to the cadl\`{a}g-property of \eqref{eq:diffOCJDP} and the equality of the above limits, we obtain the convergence of the optimal control in the update setting towards the optimal control in the idealized setting.
\end{proof}

%%%%%%%%%%%%%%%%%%%%%%%%%%%%%%%%%%%%%%%%%%%%%%%%%%%%%%%%%%%%%%%%%%%%%%%%%%%%%%%%%%%
\section{Numerical results for the SOC} \label{sec:Numerics}
In this section, we analyze the optimal control problem \eqref{eq:SOC-Prob}-\eqref{eq:JDPinOptProb} from a numerical point of view. To make the problem numerically tractable with classical optimization algorithms, we first derive an explicit expression of the cost function \eqref{eq:OF}. A detailed numerical study allows then to investigate the strategies CM1--CM3 for the different types of stochastic demands.   

\subsection{Analytical treatment of the cost function}\label{subsec:analyticalTreatmentCost}

As it turns out, the cost function \eqref{eq:OF} can be expressed in a direct way.
Using the reformulated cost function combined with constraint \eqref{eq:transportDyn}, the original SOC \eqref{eq:SOC-Prob} - \eqref{eq:JDPinOptProb} reduces to a deterministic optimal control problem of demand tracking. Similar problems have already been investigated in \cite{Goettlich2018,Lamarca2010}.

We start by replacing the expectation in the objective function \eqref{eq:OF} by an explicit expression in terms of the first two moments of the demand process:
\begin{align}
	\E\left[(Y_t - y(t))^2 \right] &= \E\left[Y_t^2\right] - 2y(t)\E\left[Y_t\right] +y(t)^2. \label{eq:costFctMomRep}
\end{align}
This approach allows to incorporate the demand dynamics directly into the objective function as those two quantities are the only characteristics of the demand process influencing the objective function. As a consequence, for this particular cost structure, the stochastic demand constraint \eqref{eq:JDPinOptProb} is no longer needed and the control problem is only restricted by the transport equation \eqref{eq:transportDyn}.

The first moment in case of an OUP-type or a JDP-type demand have already been calculated in the setting without updates, see Subsection \ref{subsec:OCdiffset}.
Thus, in order to get an explicit representation of the cost function, it remains to calculate the second moment of the demand process. Details of the calculation can be found in the Appendix \ref{app:secondMomJDP}.
\begin{align}
\E\left[Y_t^2\right] =& \E\left[\left(e^{-\kappa t}y_0 + \sigma \int_{0}^{t} e^{-\kappa (t-s)} dW_s + \kappa \int_{0}^{t} e^{-\kappa (t-s)}\mu(s) ds + \gamma \sum_{i=1}^{N_t} e^{-\kappa (t-t_i)}\right)^2\right] \notag\\
=& \left(e^{-\kappa t}y_0 + \kappa \int_{0}^{t} e^{-\kappa (t-s)}\mu(s) ds\right)^2 + \frac{\sigma^2}{2\kappa}\left(1-e^{-2 \kappa t}\right) + \E\left[\left(\sum_{i=1}^{N_t}\gamma_{t_i} e^{-\kappa (t-t_i)}\right)^2\right] \notag\\
&+ 2\left(e^{-\kappa t}y_0 + \kappa \int_{0}^{t} e^{-\kappa (t-s)}\mu(s) ds\right) \cdot \bar{\gamma} \frac{\nu}{\kappa}\left(1-e^{-\kappa t}\right). \label{eq:secMomJDP}
\end{align}

Now, it remains to compute $\E\left[\left(\sum_{i=1}^{N_t}\gamma_{t_i} e^{-\kappa (t-t_i)}\right)^2\right] $.
\begin{align*}
	\E\left[\left(\sum_{i=1}^{N_t}\gamma_{t_i} e^{-\kappa (t-t_i)}\right)^2\right] &= \nu \cdot \frac{(1-e^{-2\kappa t})}{2\kappa} \cdot \E\left[\gamma^2\right] + \nu^2 \cdot \frac{1+e^{-2\kappa t}-2e^{-\kappa t}}{\kappa^2} \cdot \bar{\gamma}^2.
\end{align*}

The second moment of the JDP \eqref{eq:nonCompJDP} consequently reads as
\begin{align}
	\E\left[Y_t^2\right] &= y_0^2e^{-2\kappa t} + 2y_0e^{-\kappa t} \int_{0}^{t}e^{-\kappa(t-s)}\kappa \mu(s) ds + \left(\kappa\int_{0}^{t}e^{-\kappa(t-s)} \mu(s) ds\right)^2 \notag\\
	&+ \frac{\sigma^2}{2\kappa}\left(1-e^{-2 \kappa t}\right) + \nu \cdot \frac{(1-e^{-2\kappa t})}{2\kappa} \cdot \E\left[\gamma^2\right] + \nu^2 \cdot \frac{1+e^{-2\kappa t}-2e^{-\kappa t}}{\kappa^2} \cdot \bar{\gamma}^2 \notag\\
	&+ 2\left(e^{-\kappa t}y_0 + \kappa \int_{0}^{t} e^{-\kappa (t-s)} \mu(s) ds\right) \cdot \bar{\gamma} \frac{\nu}{\kappa}\left(1-e^{-\kappa t}\right). \label{eq:secMomJDPExp}
\end{align}

Summarizing, the complete deterministic reformulation of the cost term \eqref{eq:costFctMomRep} based on the first two moments (cf. \eqref{eq:firstMomJDPExp} and \eqref{eq:secMomJDPExp}) of the JDP is given by 
\begin{align}
	\E\left[(Y_t - y(t))^2 \right] =& y_0^2e^{-2\kappa t} + 2y_0e^{-\kappa t} \int_{0}^{t}e^{-\kappa(t-s)}\kappa \mu(s) ds + \left(\kappa\int_{0}^{t}e^{-\kappa(t-s)} \mu(s) ds\right)^2 \notag\\
	&+ \frac{\sigma^2}{2\kappa}\left(1-e^{-2 \kappa t}\right) + + \nu \cdot \frac{(1-e^{-2\kappa t})}{2\kappa} \cdot \E\left[\gamma^2\right] + \nu^2 \cdot \frac{1+e^{-2\kappa t}-2e^{-\kappa t}}{\kappa^2} \cdot \bar{\gamma}^2 \notag\\
	&+ 2\left(e^{-\kappa t}y_0 + \kappa \int_{0}^{t} e^{-\kappa (t-s)} \mu(s) ds\right) \cdot \bar{\gamma}\frac{\nu}{\kappa}\left(1-e^{-\kappa t}\right) \notag\\
	&- 2y(t) \cdot \left( e^{-\kappa t}y_0 + \kappa \int_{0}^{t} e^{-\kappa (t-s)}\mu(s) ds \right. \notag\\
	&\left.+ \bar{\gamma} \frac{\nu}{\kappa}\left(1-e^{-\kappa t}\right)\right) + y(t)^2. \label{eq:costFctMomRepExpJDP}
\end{align}

As already mentioned, the explicit expression of the cost function for an OUP-type demand can be obtained by setting the jump height $\gamma_t \equiv 0$ in \eqref{eq:costFctMomRepExpJDP}.

\subsection{Numerical investigation of the optimal control} \label{subsec:numInvestOC}
By plugging in \eqref{eq:costFctMomRepExpJDP} in \eqref{eq:OF}, we are left with the optimal control problem \eqref{eq:SOC-Prob}-\eqref{eq:control}.
This means, we are able to apply common optimization tools to solve the problem numerically.
In our case, we use the nonlinear optimization solver \textit{fmincon} from MATLAB R2015b\footnote{\url{https://de.mathworks.com/help/optim/ug/fmincon.html}, last checked: Sept 21, 2018}.
The key idea of the numerical experiments is to see how the numerical solution performs compared to the closed solution expressions in Theorem \ref{thm:controls}, in particular in the case of market updates. 

The computational setting is as follows: The transport equation \eqref{eq:transportDyn} is discretized straightforward using a left-sided Upwind scheme \cite{LeVeque1990}, i.e. 
$$
\frac{z(x_j,\tau_{i+1})-z(x_j,\tau_i)}{\Delta \tau} + \lambda \frac{z(x_j,\tau_i)-z(x_{j-1},\tau_i)}{\Delta x}=0.
$$
The step sizes $\Delta x=0.1$, and $\Delta \tau=\nicefrac{\Delta x}{\lambda}$ are chosen in a way such that stability (i.e. the CFL condition) for the transport equation is satisfied.
Furthermore, we assume an empty system at the beginning, i.\ e. $z_0(x_j) = 0$ for all $x_j \in (0,1)$. 
\subsubsection{Deterministic demand}
For validation purposes, we first perform computations for $Y_t = 2 + \sin(0.5\pi t)$ for $0.5\leq t\leq 5$, i.e.\ a purely deterministic demand, with $\Delta x=0.5$. Thereby, the transport velocity is chosen to be $\lambda=2$ and the total time horizon is $T=5$. 
In Figure \ref{fig:detdemand2plussin05pitlambda2t5feval6000}, we observe that the optimal control is close to the deterministic demand shifted by the transportation time $\nicefrac{1}{\lambda}=0.5$ and $u(\tau_i)=y(\tau_i+\nicefrac{1}{\lambda})$ holds. 
\begin{figure}[h!]
\centering
\includegraphics[width=0.5\textwidth]{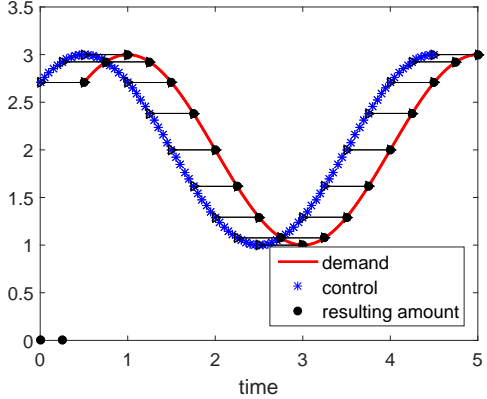}
\caption{Optimal control and available power in a deterministic demand setting}
\label{fig:detdemand2plussin05pitlambda2t5feval6000}
\end{figure}

\subsubsection{Stochastic demand}
We continue our numerical study and, in a next step, address the setting with stochastic demands supplemented with the strategies CM1 (without market updates) and CM2 (with regular market updates). 
In order to use the \textit{fmincon} solver together with the deterministic reformulation of the cost function including updates, we need to slightly modify the control problem for the update setting. 
We consider a partition of the interval $[0,T]$ in subintervals $[\hatt_i,\hatt_{i+1}]$, where $\hatt_i=i \cdot \Delta t_{\text{up}}$, \newline $i \in \{0,1,\cdots,\nicefrac{T-\invLa}{\Delta t^{\text{up}}}\}$ and update frequency $\Delta t^{\text{up}} \in [0,T]$.
Then, we solve the following sequence of optimization problems determined by the subintervals with \textit{fmincon} again.
\begin{align}
	\min_{u(t), t \in [\hatt_i,\hatt_{i+1}]} & \int_{\hatt_i + \invLa}^{\min\{\hatt_{i+1}+\invLa,T\}} \E\left[(Y_t - y(t))^2 | \FF_{\hatt_i}\right] dt \notag \\
	z_t + \lambda z_x & = 0, \quad z(0,t) = u(t), \notag \\
	z(x,\hatt_i) & =  z_{\text{old}}(x,\hatt_i), \quad x \in (0,1), t \in [\hatt_i,\min	\{\hatt_{i+1}+\invLa,T\}], \label{eq:SOC-Prob_update}
\end{align}
where $z_{\text{old}}(x,\hatt_i)$ is the state of the system at update time $\hatt_i$. Note that $z_{\text{old}}(x,\hatt_i)$ is equal to $z_0(x)$ for $i=0$ but needs to be assigned the possibly different state of the system at each update time $\hatt_i$.

The remainder is now three-fold: First, we show numerical results for an OUP-type demand in both settings. Second, we present corresponding results for a JDP-type demand. Third, we conclude the numerical study with a validation of the numerical solution against the analytical one from Section \ref{sec:TheoreticalOCapproaches}.

The parameter settings are given in Table \ref{tab:ParaPS}. The settings only differ in the speed of mean reversion $\kappa$, and the constantly chosen jump height $\gamma_t \equiv \gamma$.
\begin{table}[h!]
	\centering
	\begin{tabular}{lcrrr}
		\hline 
		Parameter &  & $PS1$ & $PS2$ & $PS3$ \\ 
		\hline 
		Transport velocity & $\lambda$ & 4 & $PS1$ & $PS1$ \\ 
		Time horizon & $T$ & 1 & $PS1$ & $PS1$ \\ 
		Mean demand level & $\mu(t)$ & $2 + 3 \cdot \sin(2\pi t)$ & $PS1$ & $PS1$ \\
		Speed of mean reversion & $\kappa$ & \textbf{1} & \textbf{3} & \textbf{3} \\
		Intensity of demand fluctuations & $\sigma$ & 2 & $PS1$ & $PS1$  \\ 
		Initial demand & $y_0$ & 1 & $PS1$ & $PS1$  \\
        Jump height & $\gamma$ & \textbf{0} & $PS1$ & \textbf{1}\\
        Jump intensity & $\nu$ & 5 & $PS1$ & $PS1$ \\ 
		\hline 
	\end{tabular}
	\caption{Parameter settings}
	\label{tab:ParaPS}
\end{table}

We investigate the average match between the available power at $x=1$ and the realized demand based on the different information scenarios CM1 and CM2. In those figures, we depict the available power by a black dash-dotted line, the optimal control by a blue dotted line and the mean demand by a thicker dashed blue line.

To get further insight how the demand process, starting in $y_0$, stochastically evolves over time, we plot the confidence levels of the demand process in grey scale. Furthermore, we add the resulting available power from the optimal control (black dash-dotted), and the mean realization of the OUP-demand (blue-dashed). 

\subsubsection*{OUP-type demand: Parameter settings PS1 and PS2}
The optimal transport processes are depicted in Figures \ref{fig:transportNoUpdates_lambda4_initialFix1_2Plus3Malsin2pit_kappa1_tildeSigma2_MC10P3} and \ref{fig:transportNoUpdates_lambda4_initialFix1_2Plus3Malsin2pit_kappa3_tildeSigma2_MC10P3}. We can deduce from those figures that for increasing values of $\kappa$, the shape of the control and, thus, the available power, reflects stronger the sine-shaped mean demand level.
\begin{figure}[h!]
	\subfloat[\ $PS1$ \label{fig:transportNoUpdates_lambda4_initialFix1_2Plus3Malsin2pit_kappa1_tildeSigma2_MC10P3}]{\includegraphics[width=0.5\linewidth]{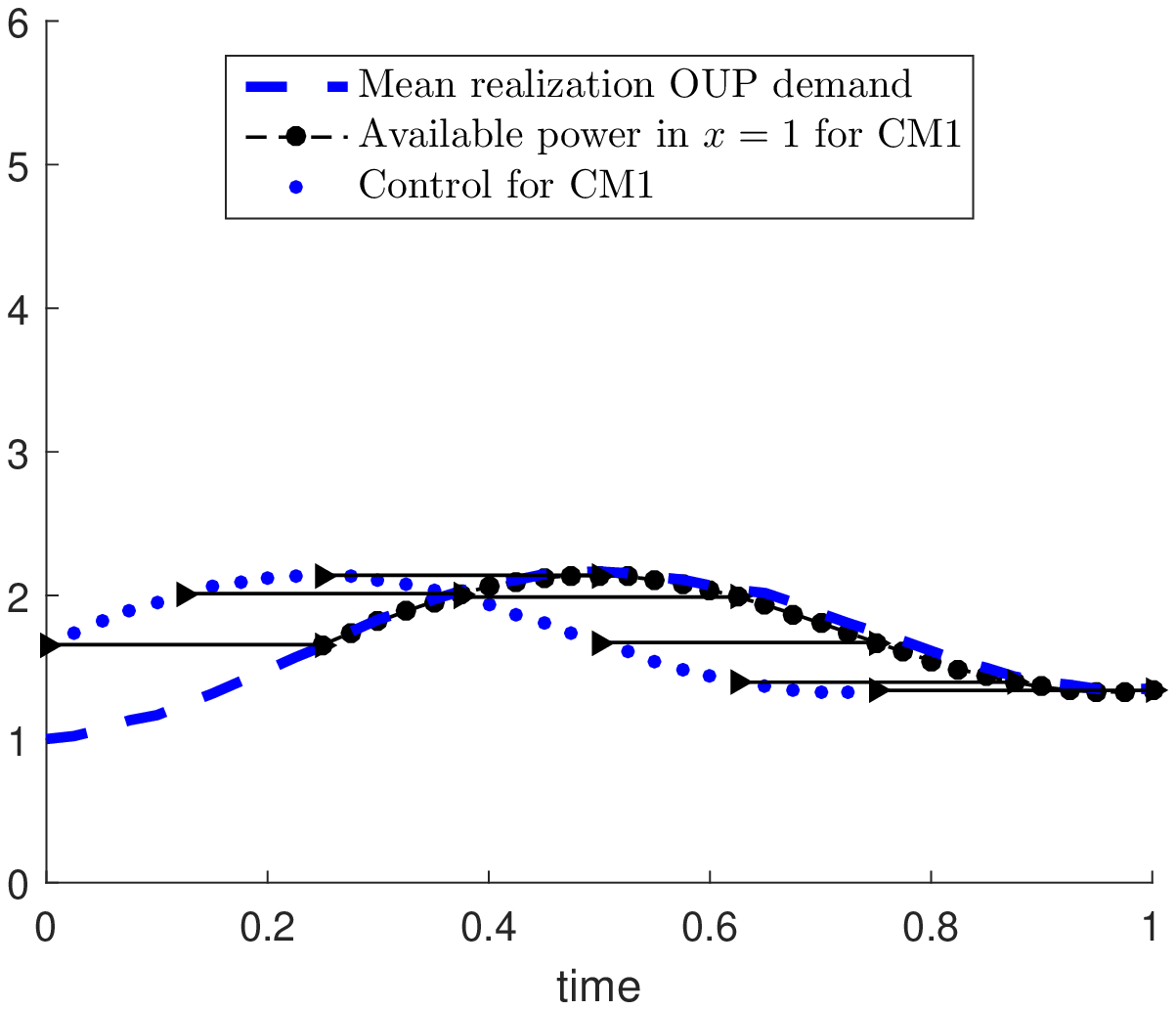}}\hfill
    \subfloat[\ $PS2$ \label{fig:transportNoUpdates_lambda4_initialFix1_2Plus3Malsin2pit_kappa3_tildeSigma2_MC10P3}]{\includegraphics[width=0.5\linewidth]{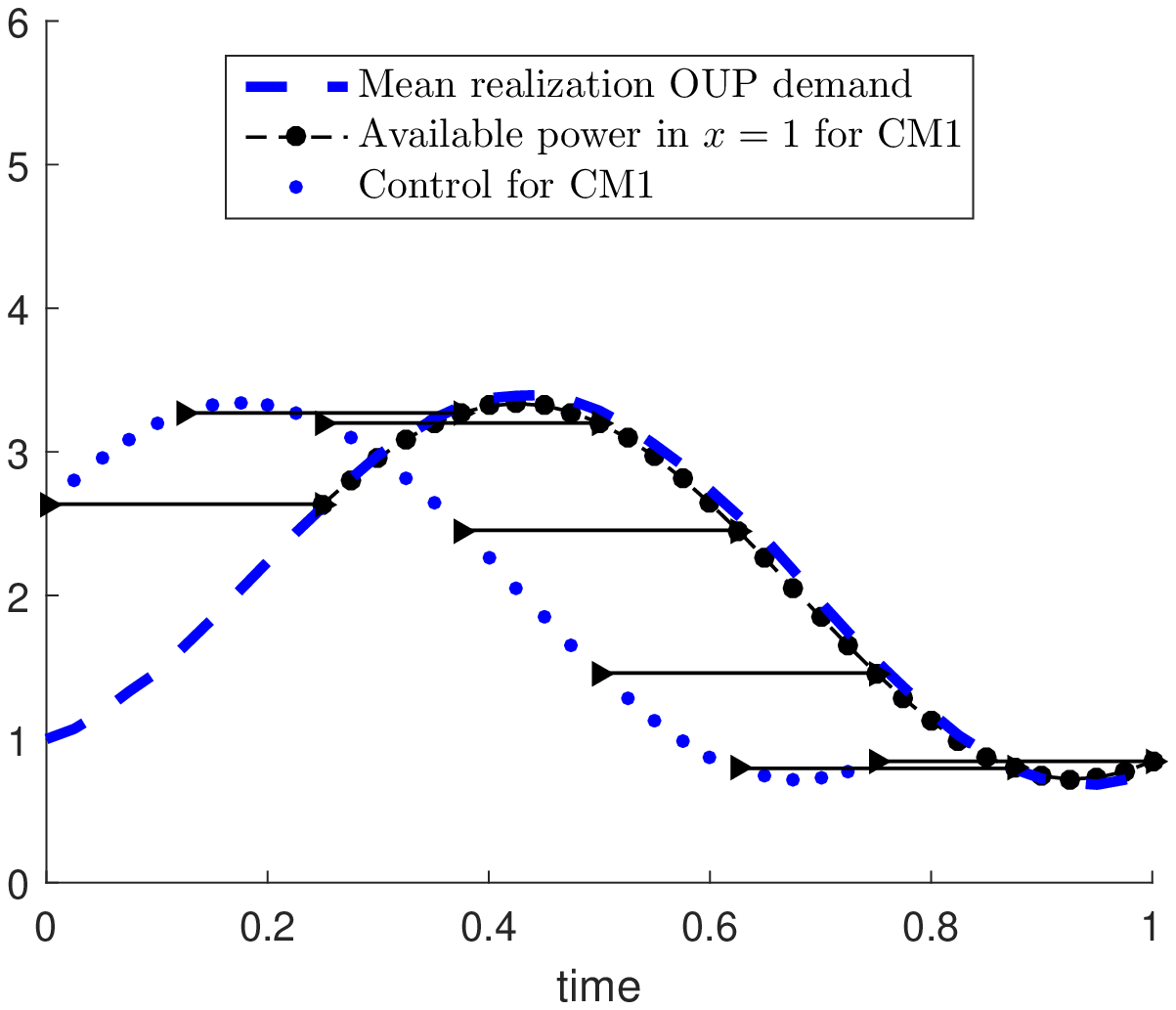}}
	\caption{Optimal control, output and mean realization of demand}
\end{figure}

In Figures \ref{fig:outputOUPnoUpdates_lambda4_initialFix1_2Plus3Malsin2pit_kappa1_tildeSigma2_MC10P3} and \ref{fig:outputOUPnoUpdates_lambda4_initialFix1_2Plus3Malsin2pit_kappa3_tildeSigma2_MC10P3}, 
we recognize that for higher speeds of mean reversion $\kappa$, the realizations of the OUP-type demand are less spread and more concentrated around the mean realization of the demand. Demand forecasts for a low value of $\kappa$ are difficult due to a flat mean realization of the demand and large fluctuations around the latter. Those fluctuations arise from the low attraction to the mean demand level. In this setting, we will see that there is an even greater benefit of control strategy CM2. This is due to the incorporation of actual demand information from market data into the forecast and, hence, also into the optimal control.
\begin{figure}[h!]
	\subfloat[\ $PS1$ \label{fig:outputOUPnoUpdates_lambda4_initialFix1_2Plus3Malsin2pit_kappa1_tildeSigma2_MC10P3}]{\includegraphics[width=0.5\linewidth]{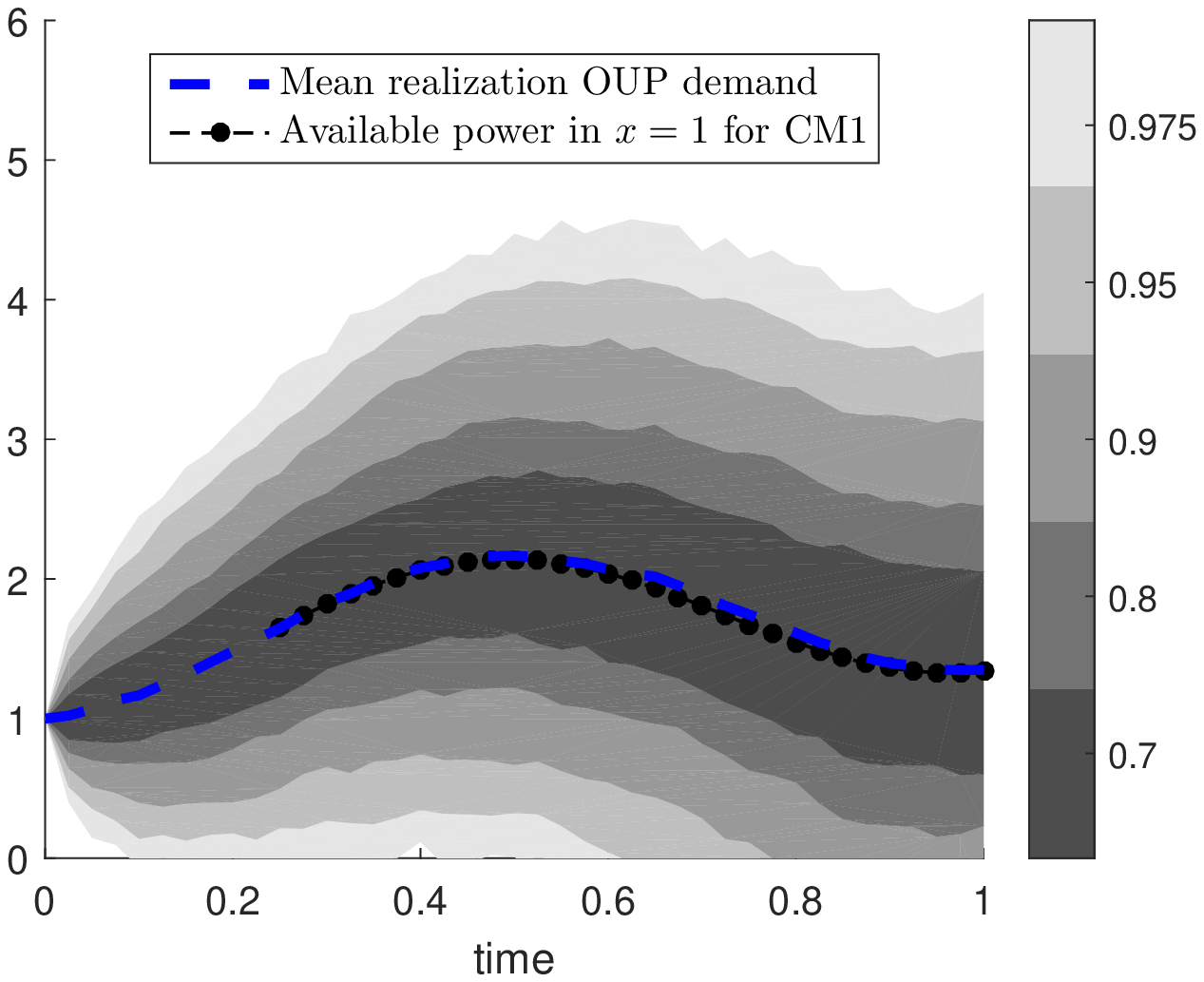}}\hfill
    \subfloat[\ $PS2$ \label{fig:outputOUPnoUpdates_lambda4_initialFix1_2Plus3Malsin2pit_kappa3_tildeSigma2_MC10P3}]{\includegraphics[width=0.5\linewidth]{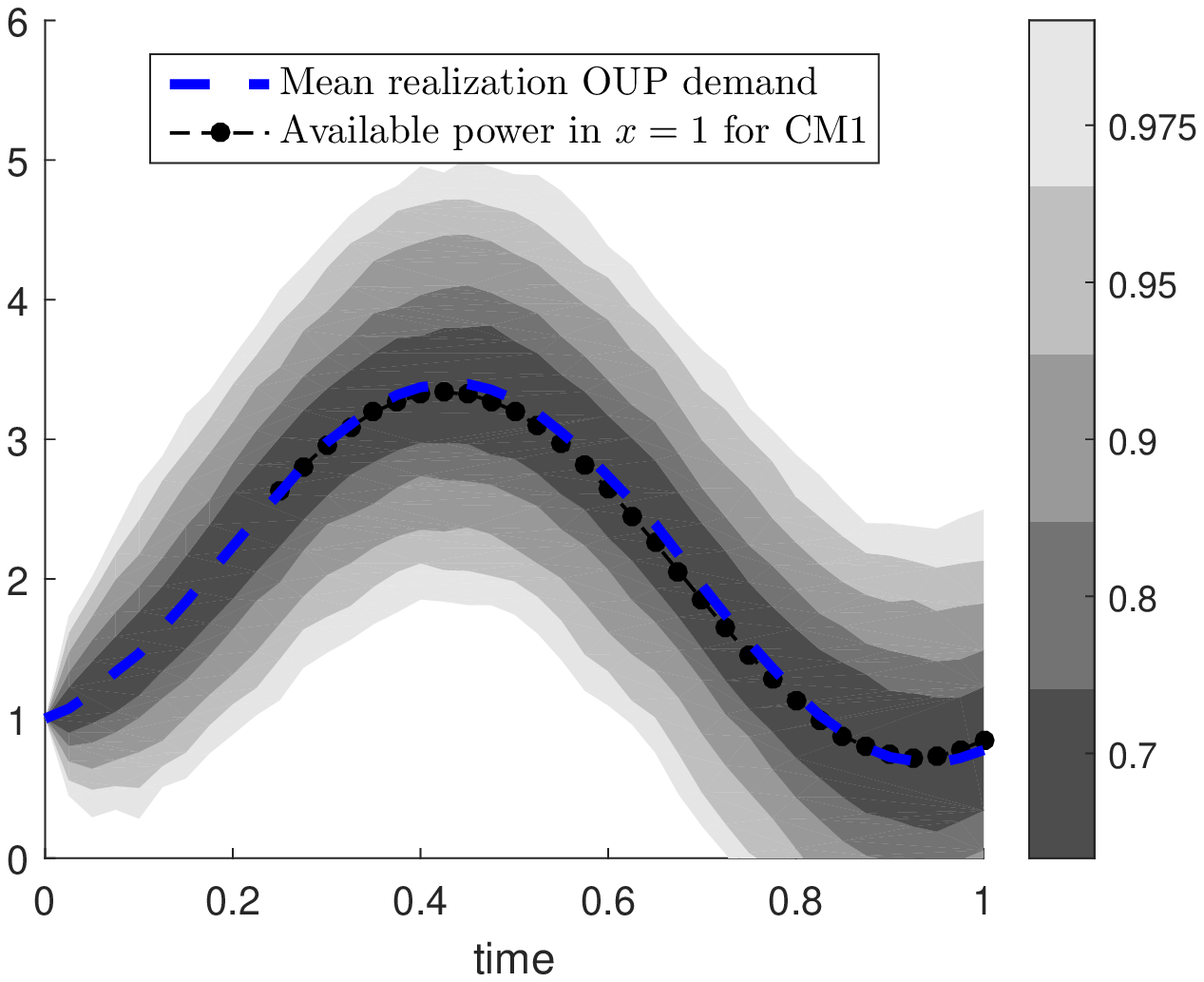}}
	\caption{Available power in $x=1$, mean realization and confidence levels of demand}
\end{figure}

Now, we address the numerical solution of \eqref{eq:SOC-Prob_update} in the setting of regular market updates. The update frequency is chosen to be $\Delta t^{\text{up}}=5$.
The optimal control (black with asterisk), output (red-dotted), and the updated mean realization of the demand (blue-dashed) corresponding to $PS1$ are depicted in Figure \ref{fig:transportlambda4initialfix12plus3malsin2pitkappa1tildesigma2mc10p3innermc10p3path5}. In addition, the linear transport is visualized by the straight black arrows from the control to the output. We can see that the optimal control is close to the updated mean stochastic demand shifted by the transportation time $\invLa = 0.25$. 

\begin{figure}[h!]
\subfloat[\ Optimal control, output and updated mean realization of demand \label{fig:transportlambda4initialfix12plus3malsin2pitkappa1tildesigma2mc10p3innermc10p3path5}]{\includegraphics[width=0.5\linewidth]{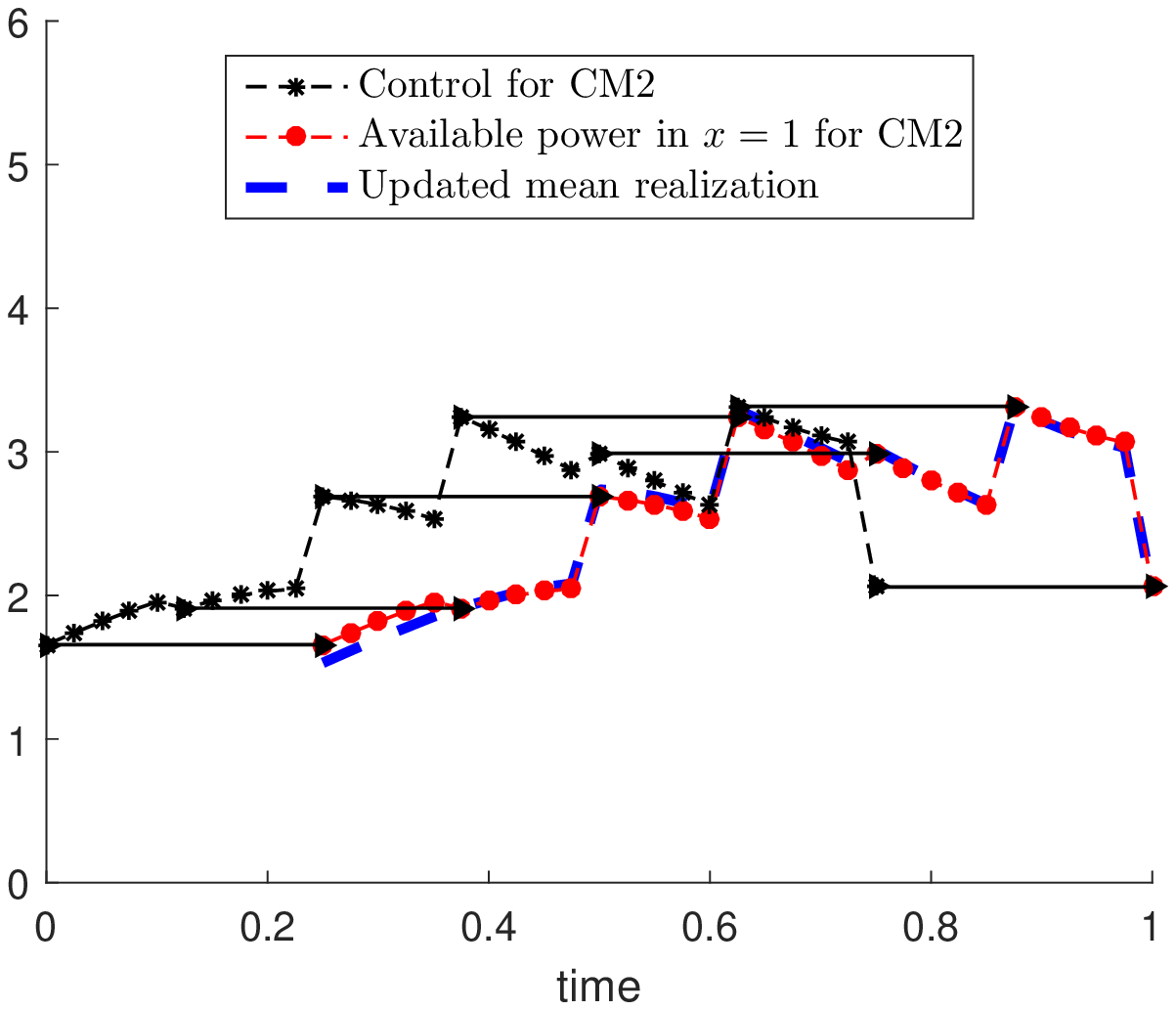}}\hfill
\subfloat[\ Comparison of the output with and without updates  \label{fig:outputstochasticslambda4initialfix12plus3malsin2pitkappa1tildesigma2mc10p3innermc10p3path5}]{\includegraphics[width=0.5\linewidth]{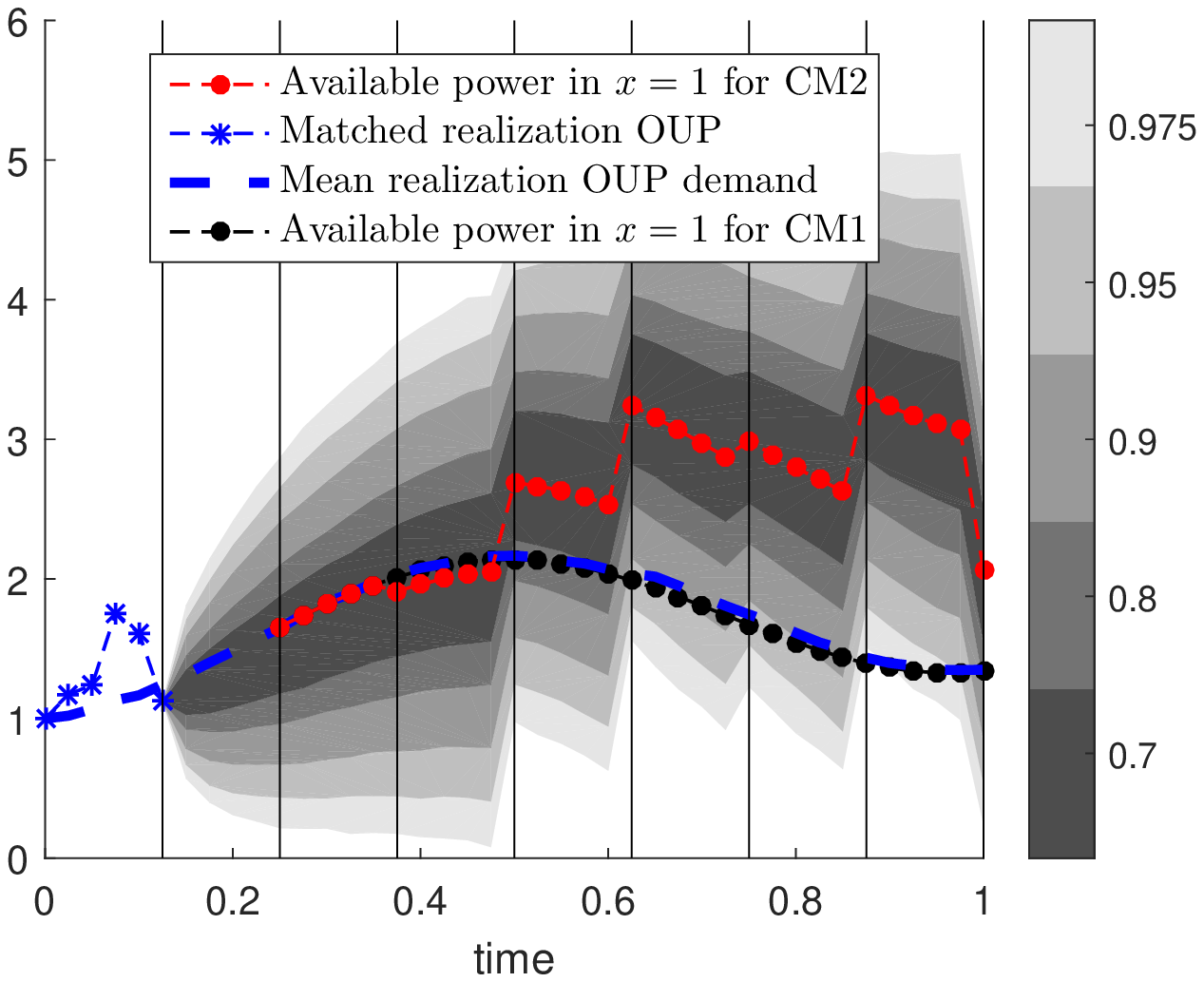}}
\caption{Numerical results for $PS1$ based on the update algorithm}
\end{figure}

Next, we compare the performance of the setting without updates (CM1) to the one with regular updates (CM2) for parameter setting $PS1$. The focus is on the quality of results obtained by CM1 and CM2.
 In Figure \ref{fig:outputstochasticslambda4initialfix12plus3malsin2pitkappa1tildesigma2mc10p3innermc10p3path5}, we plot the resulting available power from the optimal control without updates (black-dotted), and with updates (red-dotted), the mean realization of the OUP-demand (blue-dashed), the confidence levels to the updated forecasts of the demand in grey scale, and the path of the demand process, where the forecast is based on, as a blue-dashed line with asterisks up to the first update time. It results that, on average, the output of CM2 outperforms the one of CM1.

This outperformance can also be measured by the so-called \textit{cumulative root mean squared error (cumRMSE)}, which we define as
\begin{align*}
	\text{cumRMSE}(y(t)) \defgl \int_{\Delta t}^{T} \sqrt{\E\left[(Y_t - y(t))^2\right]}.
\end{align*}
The numerical cumRMSE of CM1 and CM2 can be found in Table \ref{tab:Comparison_cumRMSE}.
% Werten in Tabelle cumRMSE-rel-reduction liegen Werte vom Durchlauf vom 29. bzw. 30.08.2018 mit explicit reformulation zu Grunde
\begin{table}[h!]
	\centering
	\begin{tabular}{|r|rr|r|}
		\hline
		& CM1 & CM2 & relative reduction\\
		\hline
		PS1 & $0.9325$ & $0.7526$ & 19.29\%\\
		PS2 & $0.6434$ & $0.6002$ & 6.71\% \\
		\hline
	\end{tabular}%
	\caption{Comparison of cumRMSEs with and without updates for $PS1$ and $PS2$}
	\label{tab:Comparison_cumRMSE}
\end{table}

We observe that we really attain a relative reduction of the cumRMSE passing from CM1 to CM2 in both parameter settings. For $PS1$, where the speed of mean reversion is lower, the relative reduction is even more pronounced, which goes along with our intuitive understanding: Having a lower attraction to the mean demand level, leads to a slower mean reversion and, in tendency, to an increase in the time the process passes away from the mean demand level. Hence, the updates of the demand actually realized gain importance.

\subsubsection*{JDP-type demand: Parameter setting PS3}
Finally, we present the numerical results for a JDP-type demand using parameter setting $PS3$. Note that, except for the jumps, this setting is equal to $PS2$. Again, we start with strategy CM1, i.e without updates on the demand. 
The first important observation is that the optimization still works in the presence of jumps: The optimal control shifted by the transportation time matches well the mean realization of the demand, see Figure \ref{fig:transportJDPNoUpdates_lambda4_initialFix1_2Plus3Malsin2pit_kappa3_tildeSigma2_MC10P3_nu5_gamma1}. The second observation shows a clear difference between the OUP-type and the JDP-type demand. 
Whereas in the OUP-setting, the mean realization of the process lies slightly below 1 around $T=1$ (cf. Figure \ref{fig:transportNoUpdates_lambda4_initialFix1_2Plus3Malsin2pit_kappa3_tildeSigma2_MC10P3}), the mean realization in the JDP-setting at the same time lies slightly above $2$ (cf. Figure \ref{fig:transportJDPNoUpdates_lambda4_initialFix1_2Plus3Malsin2pit_kappa3_tildeSigma2_MC10P3_nu5_gamma1}). This shows a clear upward trend of the JDP-type demand which is due to the positive fixed jump height $\gamma \equiv 1$. This nonzero jump height also leads to an increase in the amplitude of the confidence intervals (compare Figures \ref{fig:outputOUPnoUpdates_lambda4_initialFix1_2Plus3Malsin2pit_kappa3_tildeSigma2_MC10P3} and \ref{fig:outputJDPnoUpdates_lambda4_initialFix1_2Plus3Malsin2pit_kappa3_sigma2_MC10P3_nu5_gamma1}).
%%%%%%%%%%%%%%%%%%%%%%%%%%%%%%
\begin{figure}[h!]
	\subfloat[\ Optimal control, output and mean realization of demand \label{fig:transportJDPNoUpdates_lambda4_initialFix1_2Plus3Malsin2pit_kappa3_tildeSigma2_MC10P3_nu5_gamma1}]{\includegraphics[width=0.5\linewidth]{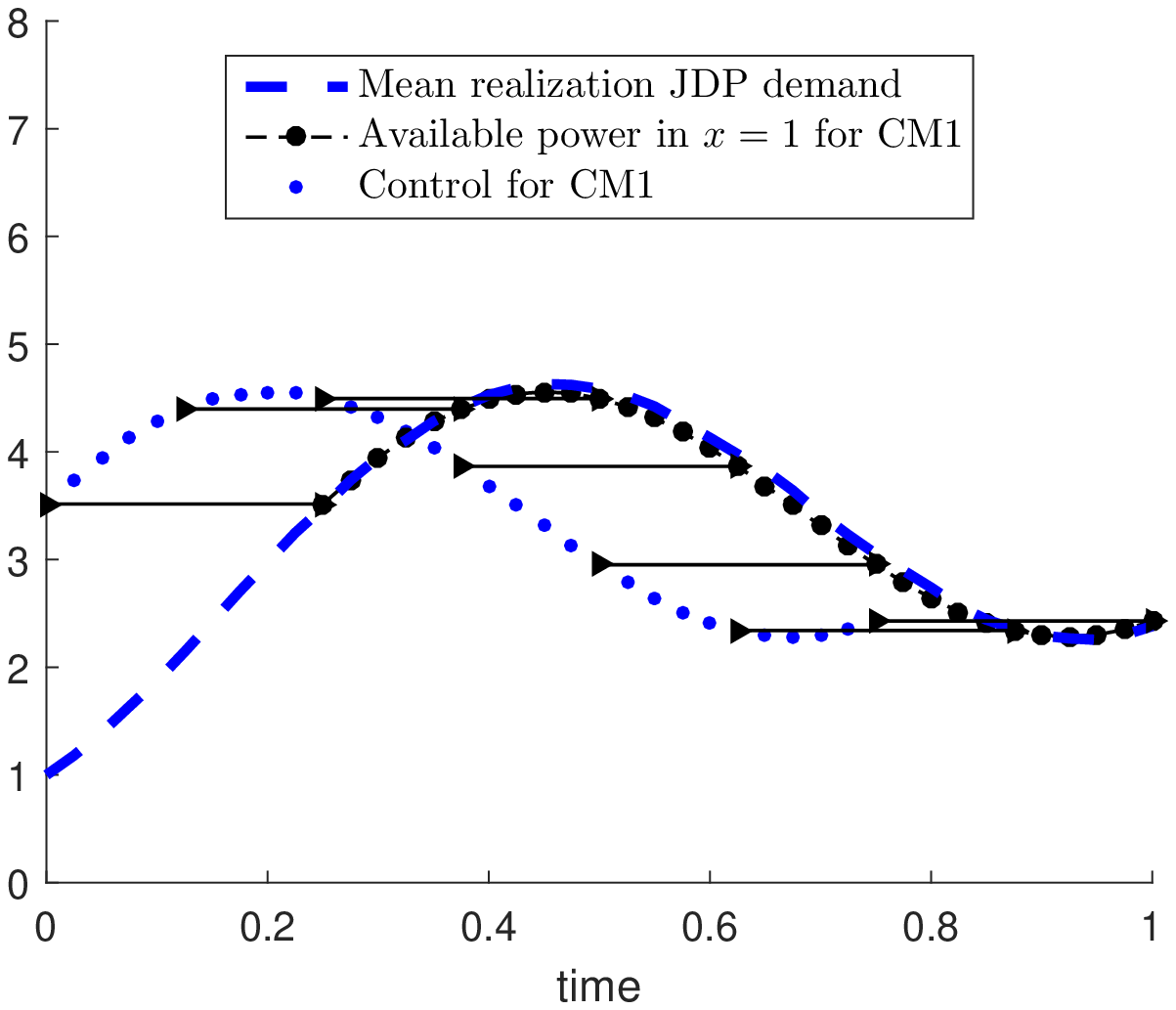}}\hfill
	\subfloat[\ Available power in $x=1$, mean realization and confidence levels of demand \label{fig:outputJDPnoUpdates_lambda4_initialFix1_2Plus3Malsin2pit_kappa3_sigma2_MC10P3_nu5_gamma1}]{\includegraphics[width=0.5\linewidth]{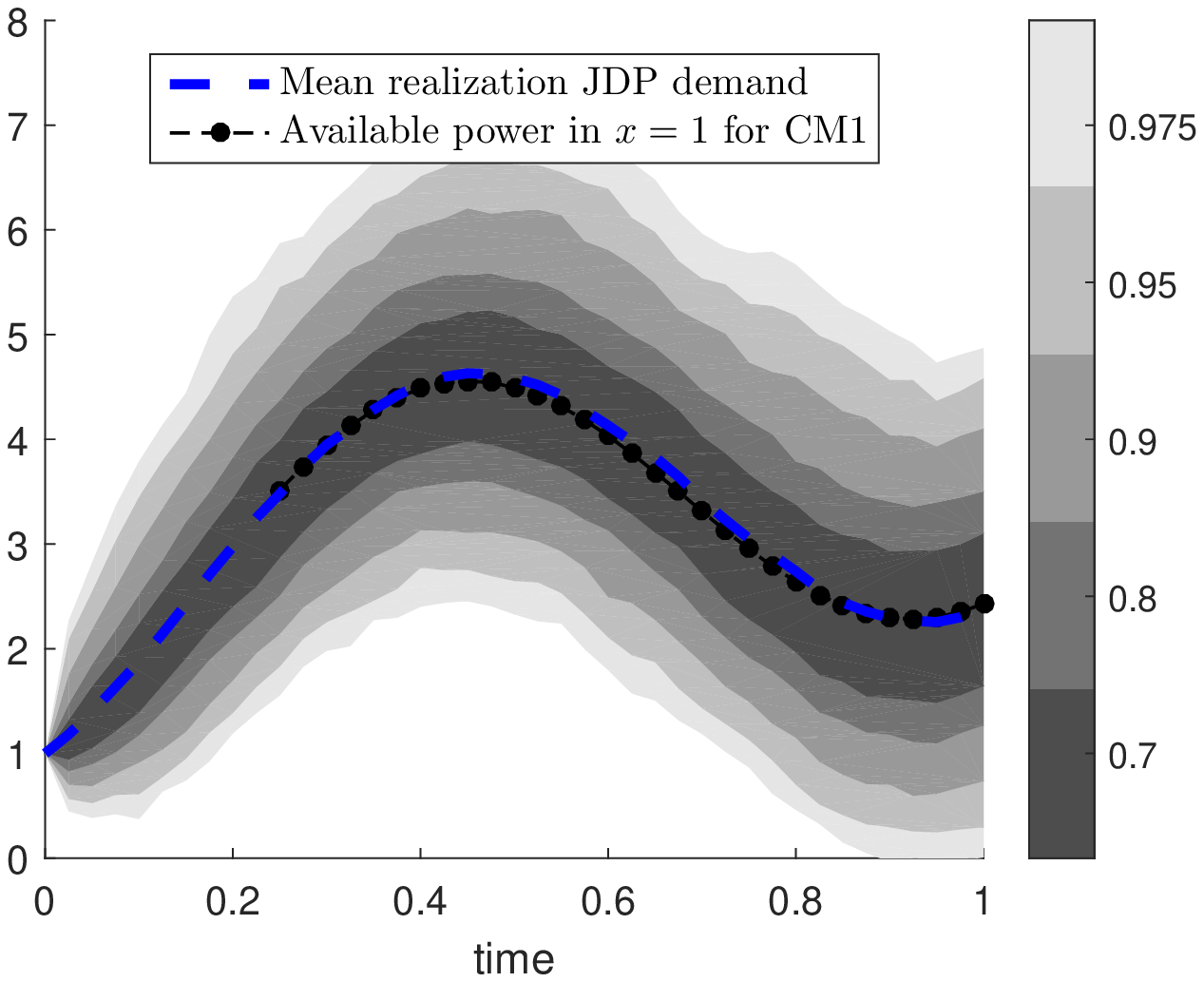}}\hfill
    
	\caption{Numerical results for $PS3$ without updates}
\end{figure}
%%%%%%%%%%%%%%%%%%%%%%%%%%%

\begin{figure}[h!]
	\subfloat[\ Optimal control, output, and updated mean realization of demand  \label{fig:transportJDPwithUp_lambda4_initial1_2plus3sin2pit_kappa3_sigma2_nu5_gamma1_T1_up5_MC1000path5}]{\includegraphics[width=0.5\linewidth]{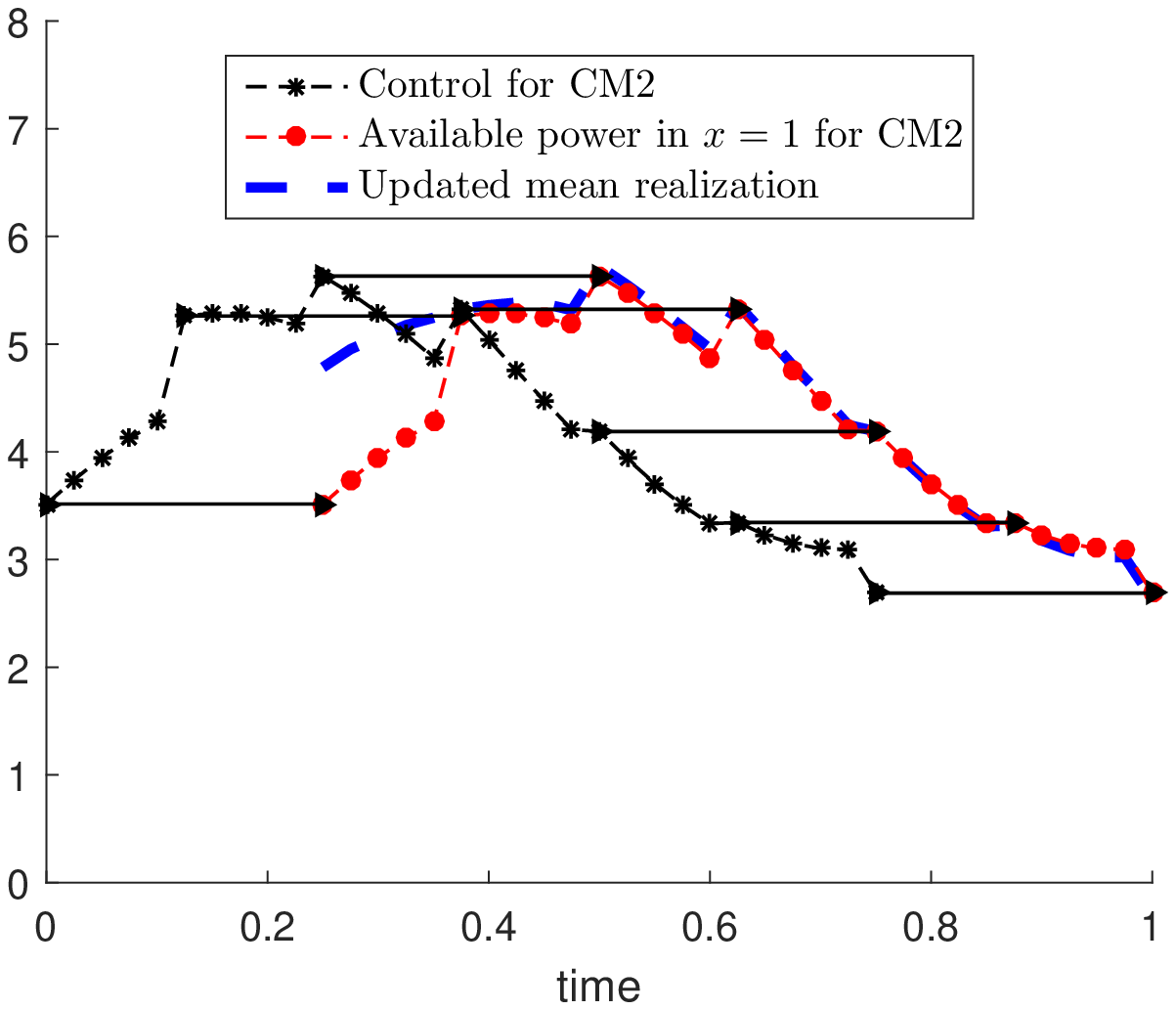}}
    \subfloat[\ Comparison of the output with and without updates  \label{fig:outputJDPwithUp_lambda4_initial1_2plus3sin2pit_kappa3_sigma2_nu5_gamma1_T1_up5_MC1000path5}]{\includegraphics[width=0.5\linewidth]{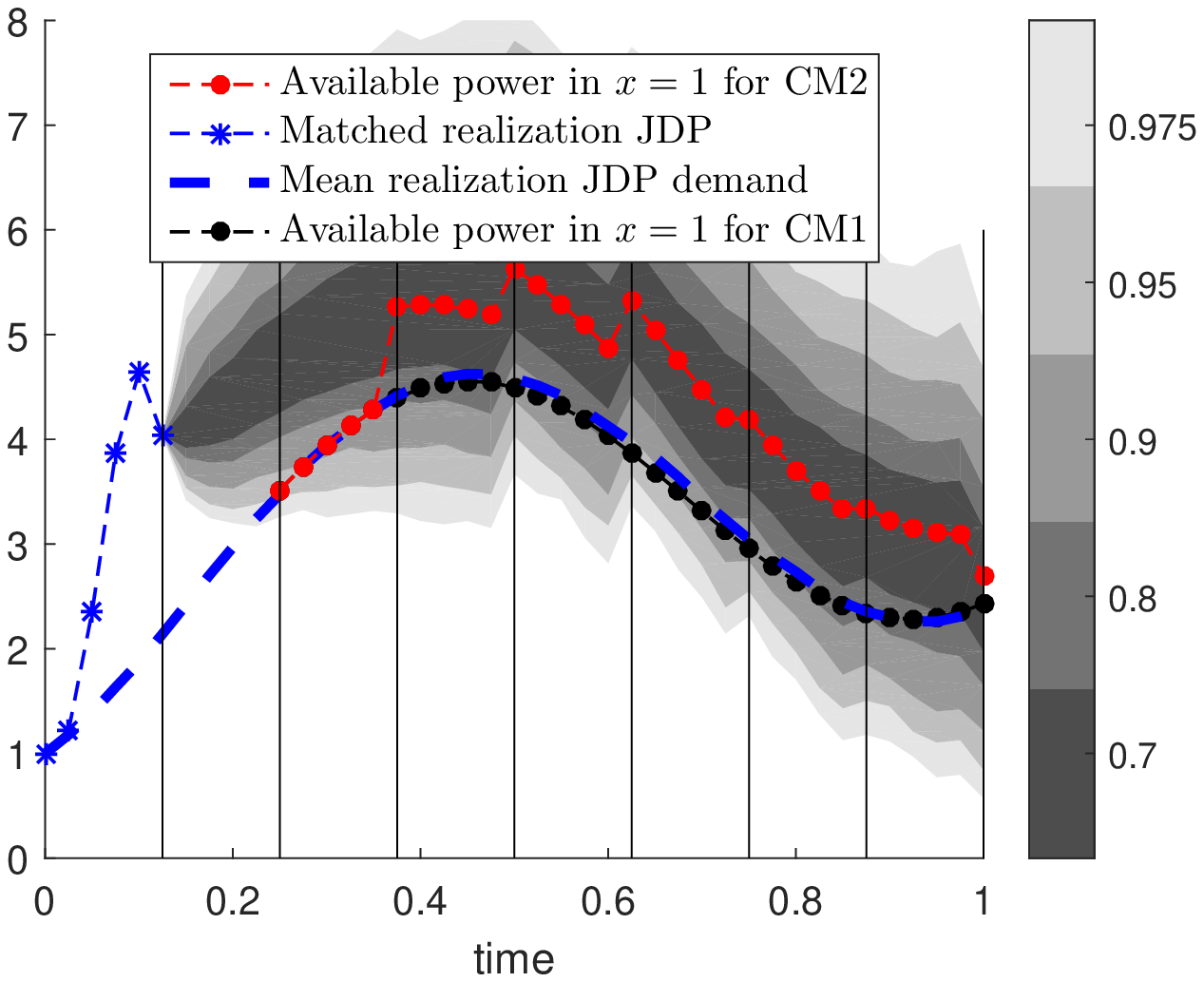}}
	\caption{Numerical results for $PS3$ based on update algorithm}
    \label{fig:numResPS3update}
\end{figure}
In Figure \ref{fig:numResPS3update}, we see that, on average, updates help to enhance the performance especially because they better capture the upward trend of the demand, which is due to the fixed positive jump height.

\bigskip

In order to see how Theorem \ref{thm:convergenceUpToTheo} applies to our numerical example, we analyze the convergence behavior for decreasing time instances between updates for parameter setting PS3. To be more precise, we calculate the cumRMSE between the numerical solution obtained by applying CM2 and the numerical implementation of the theoretical solution \eqref{eq:optControlJDP} based on CM3.

In Table \ref{tab:convTheoreticalSol}, we observe that the cumRMSE decreases with decreasing time instances between updates and tends to zero. This coincides with the analytical result in Theorem \ref{thm:convergenceUpToTheo}.
\begin{table}[h!]
	\centering
	\begin{tabular}{cc}
		\hline 
		Time instances between updates & cumRMSE \\ 
		\hline 
		41 & 0.3885 \\ 
		5 &  0.1924 \\ 
		3 &  0.1278 \\
		2 &  0.0814 \\
		1 &  2.6701e-06 \\ 
		\hline 
	\end{tabular}
	\caption{Convergence of numerical solution based on CM2 against numerical implementation of theoretical solution for CM3}
	\label{tab:convTheoreticalSol}
\end{table}

%%%%%%%%%%%%%%%%%%%%%%%%%%%%%%%%%%%%%%%%%%%%%%%%%%%%%%%%%%%%%%%%%%%%%%%%%%%%%%%%%%%%%%%%%%%%%%%%%%%%%%%%%%%%%%%%%%%%%%%%%%%
%%%%%%%%%%%%%%%%%%%%%%%%%%%%%%%%%%%%%%%%%%%%%%%%%%%%%%%%%%%%%%%%%%%%%%%%%%%%%%%%%%%%%%%%%%%%%%%%%%%%%%%%%%%%%%%%%%%%%%%%%%%

\section{Conclusion}
In this paper, we derived an optimal control strategy for electricity production under three different information scenarios. The underlying stochastic optimal control problem was analyzed from a theoretical and numerical point of view. A suitable reformulation of the problem also allowed for the application of standard optimization solvers. 
The theoretical results were supplemented by numerical simulations illustrating the relation and convergence behavior of solutions based on the different control methods.

Future work includes the investigation of more involved (nonlinear) dynamics to describe the transport along the system, e.g.\ Euler equations for gas transport. This might lead to an appropriate Fokker-Planck control framework since the control cannot be computed in a direct way anymore. However, an adjoint calculus can be applied to study first order optimality conditions.

\section{Appendix} \label{app:secondMomJDP}
The detailed calculation of the second moment of the JDP used in Section \ref{subsec:analyticalTreatmentCost} is as follows:
\begin{align*}
\E\left[Y_t^2\right] =& \E\left[\left(e^{-\kappa t}y_0 + \sigma \int_{0}^{t} e^{-\kappa (t-s)} dW_s + \kappa \int_{0}^{t} e^{-\kappa (t-s)}\mu(s) ds +  \sum_{i=1}^{N_t} \gamma_{t_i} e^{-\kappa (t-t_i)}\right)^2\right] \notag\\
=& \E\left[\left(e^{-\kappa t}y_0 + \kappa \int_{0}^{t} e^{-\kappa (t-s)}\mu(s) ds\right)^2 + \left(\sigma \int_{0}^{t} e^{-\kappa (t-s)} dW_s + \sum_{i=1}^{N_t} \gamma_{t_i} e^{-\kappa (t-t_i)}\right)^2\right.\notag\\
&\left. +2\left(e^{-\kappa t}y_0 + \kappa \int_{0}^{t} e^{-\kappa (t-s)}\mu(s) ds\right) \cdot \left(\sigma \int_{0}^{t} e^{-\kappa (t-s)} dW_s + \sum_{i=1}^{N_t} \gamma_{t_i} e^{-\kappa (t-t_i)}\right) \right]
\end{align*}
\begin{align*}
=& \left(e^{-\kappa t}y_0 + \kappa \int_{0}^{t} e^{-\kappa (t-s)}\mu(s) ds\right)^2\E\left[\left(\sigma \int_{0}^{t} e^{-\kappa (t-s)} dW_s\right)^2\right] +\E\left[\left(\sum_{i=1}^{N_t}\gamma_{t_i} e^{-\kappa (t-t_i)}\right)^2\right] \notag\\
&+ 2\E\left[\sigma \int_{0}^{t} e^{-\kappa (t-s)} dW_s\sum_{i=1}^{N_t} \gamma_{t_i} e^{-\kappa (t-t_i)}\right] +2\left(e^{-\kappa t}y_0 + \kappa \int_{0}^{t} e^{-\kappa (t-s)}\mu(s) ds\right)\notag\\
& \cdot \E\left[\sigma \int_{0}^{t} e^{-\kappa (t-s)} dW_s + \sum_{i=1}^{N_t} \gamma_{t_i} e^{-\kappa (t-t_i)}\right]\notag\\
&= \left(e^{-\kappa t}y_0 + \kappa \int_{0}^{t} e^{-\kappa (t-s)}\mu(s) ds\right)^2 + \E\left[\sigma^2 \int_{0}^{t} e^{-2 \kappa (t-s)} ds\right] +\E\left[\left(\sum_{i=1}^{N_t}\gamma_{t_i} e^{-\kappa (t-t_i)}\right)^2\right] \notag\\
&+ 2\E\left[\sigma \int_{0}^{t} e^{-\kappa (t-s)} dW_s\right]\E\left[\sum_{i=1}^{N_t} \gamma_{t_i} e^{-\kappa (t-t_i)}\right]\notag\\
&+2\left(e^{-\kappa t}y_0 + \kappa \int_{0}^{t} e^{-\kappa (t-s)}\mu(s) ds\right) \cdot \E\left[\sigma \int_{0}^{t} e^{-\kappa (t-s)} dW_s\right]\notag\\
&+ 2\left(e^{-\kappa t}y_0 + \kappa \int_{0}^{t} e^{-\kappa (t-s)}\mu(s) ds\right) \cdot \E\left[\sum_{i=1}^{N_t}\gamma_{t_i} e^{-\kappa (t-t_i)}\right]\notag\\
&= \left(e^{-\kappa t}y_0 + \kappa \int_{0}^{t} e^{-\kappa (t-s)}\mu(s) ds\right)^2 + \sigma^2 \left[ \frac{e^{-2 \kappa (t-s)}}{2\kappa}\right]_{s=0}^{s=t} + \E\left[\left(\sum_{i=1}^{N_t}\gamma_{t_i} e^{-\kappa (t-t_i)}\right)^2\right] \notag\\
&+ 2\left(e^{-\kappa t}y_0 + \kappa \int_{0}^{t} e^{-\kappa (t-s)}\mu(s) ds\right) \cdot \E\left[\sum_{i=1}^{N_t}\gamma_{t_i} e^{-\kappa (t-t_i)}\right]\notag\\
&= y_0^2e^{-2\kappa t} + 2y_0e^{-\kappa t} \int_{0}^{t}e^{-\kappa(t-s)}\kappa \mu(s) ds + \left(\kappa\int_{0}^{t}e^{-\kappa(t-s)} \mu(s) ds\right)^2 + \frac{\sigma^2}{2\kappa}\left(1-e^{-2 \kappa t}\right) \notag\\
&+ \E\left[\left(\sum_{i=1}^{N_t}\gamma_{t_i} e^{-\kappa (t-t_i)}\right)^2\right] + 2\left(e^{-\kappa t}y_0 + \kappa \int_{0}^{t} e^{-\kappa (t-s)}\mu(s) ds\right) \cdot \E\left[\gamma_{t_i}\right] \frac{\nu}{\kappa}\left(1-e^{-\kappa t}\right). 
\end{align*}
Note that the second moment of the time-dependent OUP is obtained by setting $\gamma_{t_i} \equiv 0$ for all jump times $t_i$.
\bigskip
Thus, it remains to calculate $\E\left[\left(\sum_{i=1}^{N_t}\gamma_{t_i} e^{-\kappa(t-t_i)}\right)^2\right]$.
\begin{align*}
	\E\left[\left(\sum_{i=1}^{N_t}\gamma_{t_i} e^{-\kappa(t-t_i)}\right)^2\right]
    =& \E\left[\sum_{i=1}^{N_t}\gamma_{t_i}^2 e^{-2\kappa(t-t_i)} + \sum_{i\neq j}^{N_t}\gamma_{t_i} e^{-\kappa(t-t_i)}\gamma_{t_j} e^{-\kappa(t-t_j)}\right] \\
    =& \E\left[\sum_{i=1}^{N_t} e^{-2\kappa(t-t_i)}\right]\E\left[\gamma^2\right] + \E\left[\sum_{i\neq j}^{N_t} e^{-\kappa(t-t_i)}e^{-\kappa(t-t_j)}\right]\E\left[\gamma\right]^2\\
    =& \E\left[\sum_{i=1}^{N_t} e^{-2\kappa(t-t_i)}\right] \E\left[\gamma^2\right] \\
    & + 2\cdot \E\left[\E\left[\sum_{i=2}^{N_t} \sum_{j=1}^{i-1} e^{-\kappa(t-t_i)}e^{-\kappa(t-t_j)}|N_t\right]\right]\E\left[\gamma\right]^2.
\end{align*}
We know from \cite[Prop.\ 2.\ 1.\ 16]{Mikosch.2009} that $t_i \sim \UU[0,t]$. Thus, we calculate
\begin{align*}
	\E\left[e^{-2\kappa(t-t_i)}\right] &= \int_{0}^{t}e^{-2\kappa(t-s)}\frac{1}{t}ds = \frac{1-e^{-2\kappa t}}{2\kappa t}.
\end{align*}
We can then deduce
\begin{align*}
	\E\left[\sum_{i=1}^{N_t} e^{-2\kappa(t-t_i)}\right] &= \sum_{i=1}^{\infty} i \cdot e^{-\nu t} \frac{(\nu t)^{i}}{i !} \frac{1-e^{-2\kappa t}}{2\kappa t} = \nu \cdot \frac{(1-e^{-2\kappa t})}{2\kappa}.
\end{align*}
It remains to compute the mixed-term expectation.
\begin{align*}
	\E\left[e^{-\kappa(t-t_i)}e^{-\kappa(t-t_j)}\right | t_j<t_i, N_t=n] &= \int_{0}^{t}e^{-\kappa(t-s)} \frac{1}{t} \int_{0}^{s} e^{-\kappa(t-u)}\frac{2}{t} du ds \\
    &= \int_{0}^{t} \frac{2\cdot(e^{-2\kappa(t-s)}-e^{-\kappa (2t-s)})}{\kappa t^2} ds \\
    &= \frac{1+e^{-2\kappa t}-2e^{-\kappa t}}{\kappa^2 t^2}.
\end{align*}
Thus, we have
\begin{align*}
	\E\left[\sum_{i=2}^{N_t} \sum_{j=1}^{i-1} e^{-\kappa(t-t_i)}e^{-\kappa(t-t_j)}|N_t=n\right] =& \sum_{i=2}^{n} \sum_{j=1}^{i-1} \E\left[e^{-\kappa(t-t_i)}e^{-\kappa(t-t_j)}|N_t=n\right] \\
    =& \sum_{i=1}^{n-1} i \cdot \E\left[e^{-\kappa(t-t_{i+1})}e^{-\kappa(t-t_1)}|N_t=n\right] \\
    =&  \E\left[e^{-\kappa(t-t_{2})}e^{-\kappa(t-t_1)}|N_t=n\right] \cdot \frac{n \cdot (n-1)}{2} \\
    =& \frac{1+e^{-2\kappa t}-2e^{-\kappa t}}{\kappa^2 t^2} \cdot \frac{n \cdot (n-1)}{2}.
\end{align*}
\begin{align*}
	\E\left[\E\left[\sum_{i=2}^{N_t} \sum_{j=1}^{i-1} e^{-\kappa(t-t_i)}e^{-\kappa(t-t_j)}|N_t\right]\right] &= \E\left[N_t^2-N_t\right] \cdot \frac{1+e^{-2\kappa t}-2e^{-\kappa t}}{2 \kappa^2 t^2} \\
    &= \nu^2 \cdot \frac{1+e^{-2\kappa t}-2e^{-\kappa t}}{2 \kappa^2}.
\end{align*}
Finally, the closed-form expression is
\begin{align*}
	\E\left[\left(\sum_{i=1}^{N_t}\gamma_{t_i} e^{-\kappa(t-t_i)}\right)^2\right] &= \nu \cdot \frac{(1-e^{-2\kappa t})}{2\kappa} \cdot \E\left[\gamma^2\right] + \nu^2 \cdot \frac{1+e^{-2\kappa t}-2e^{-\kappa t}}{\kappa^2} \cdot \E\left[\gamma\right]^2.
\end{align*}

\bibliographystyle{siam}
\bibliography{mysources}

\begin{thebibliography}{10}

\bibitem{Aid.2009}
{\sc R.~A\"{i}d, L.~Campi, A.~N. Huu, and N.~Touzi}, {\em {A structural
  risk-neutral model of electricity prices}}, {Int. J. Theor. Appl. Finance},
  12 (2009), pp.~925--947.

\bibitem{Borzi.2013}
{\sc M.~Annunziato and A.~Borz{\`i}}, {\em {A Fokker--Planck control framework
  for multidimensional stochastic processes}}, {J. Comput. Appl. Math.}, 237
  (2013), pp.~487--507.

\bibitem{Annunziato.2018}
{\sc M.~Annunziato and A.~Borz{\`i}}, {\em {A Fokker--Planck control framework
  for stochastic systems}}, {EMS Surv.\ Math.\ Sci.},  (2018).
\newblock Electronically published.

\bibitem{Applebaum.2009}
{\sc D.~Applebaum}, {\em {L{\'e}vy processes and stochastic calculus}},
  vol.~116 of {Cambridge Studies in Advanced Mathematics}, {Cambridge
  University Press, Cambridge}, second~ed., 2009.

\bibitem{Barlow.2002}
{\sc M.~T. Barlow}, {\em {A diffusion model for electricity prices}}, {Math.
  Finance}, 12 (2002), pp.~287--298.

\bibitem{Benth.2008}
{\sc F.~Benth, J.~Benth, and S.~Koekebakker}, {\em {Stochastic modelling of
  electricity and related markets}}, vol.~11 of {Advanced Series on Statistical
  Science {\&} Applied Probability}, {World Scientific Publishing Co. Pte.
  Ltd., Hackensack, NJ}, 2008.

\bibitem{Breitenbach.2018}
{\sc T.~Breitenbach, M.~Annunziato, and A.~Borz{\`i}}, {\em {On the optimal
  control of a random walk with jumps and barriers}}, {Methodol. Comput. Appl.
  Probab.}, 20 (2018), pp.~435--462.

\bibitem{Gaviraghi.2017a}
{\sc B.~Gaviraghi, M.~Annunziato, and A.~Borz{\`i}}, {\em A {F}okker-{P}lanck
  based approach to control jump processes}, in Novel Methods in Computational
  Finance, vol.~25 of Mathematics in Industry, Springer, Cham, 2017,
  pp.~423--439.

\bibitem{Goettlich.2016}
{\sc S.~G{\"o}ttlich, M.~Herty, and P.~Schillen}, {\em {Electric transmission
  lines: Control and numerical discretization}}, {Optim. Control Appl. Meth.},
  37 (2016), pp.~980--995.

\bibitem{Goettlich2018}
{\sc S.~G\"{o}ttlich and C.~Teuber}, {\em Space mapping techniques for the
  optimal inflow control of transmission lines}, Optim. Methods Softw., 33
  (2018), pp.~120--139.

\bibitem{Kiesel.2009}
{\sc R.~Kiesel, G.~Schindlmayr, and R.~H. B{\"o}rger}, {\em {A two-factor model
  for the electricity forward market}}, {Quant. Finance}, 9 (2009),
  pp.~279--287.

\bibitem{Klenke.2008}
{\sc A.~Klenke}, {\em {Probability Theory: A Comprehensive Course}},
  {Springer-Verlag London}, 2008.

\bibitem{Lamarca2010}
{\sc M.~La~Marca, D.~Armbruster, M.~Herty, and C.~Ringhofer}, {\em Control of
  continuum models of production systems}, IEEE Trans. Automat. Control, 55
  (2010), pp.~2511--2526.

\bibitem{LeVeque1990}
{\sc R.~J. LeVeque}, {\em Numerical methods for conservation laws}, Lectures in
  Mathematics ETH Z\"{u}rich, Birkh\"{a}user Verlag, Basel, 1990.

\bibitem{LuciaSchwartz.2002}
{\sc J.~J. Lucia and E.~S. Schwartz}, {\em {Electricity Prices and Power
  Derivatives: Evidence from the Nordic Power Exchange}}, {Review of
  Derivatives Research}, 5 (2002), pp.~5--50.

\bibitem{Mikosch.2009}
{\sc T.~Mikosch}, {\em Non-life insurance mathematics}, Universitext,
  Springer-Verlag, Berlin, second~ed., 2009.
\newblock An introduction with the Poisson process.

\bibitem{Roy.2018}
{\sc S.~Roy, M.~Annunziato, A.~Borz{\`i}, and C.~Klingenberg}, {\em {A
  Fokker-Planck approach to control collective motion}}, {Comput. Optim.
  Appl.}, 69 (2018), pp.~423--459.

\bibitem{SchwartzSmith.2000}
{\sc E.~Schwartz and J.~E. Smith}, {\em {Short-Term Variations and Long-Term
  Dynamics in Commodity Prices}}, {Management Science}, 46 (2000),
  pp.~893--911.

\bibitem{Wagner.2014}
{\sc A.~Wagner}, {\em {Residual Demand Modeling and Application to Electricity
  Pricing}}, {The Energy Journal}, Volume 35 (2014).

\end{thebibliography}

\end{document}